\newcommand{\reals}{\mathbb{R}}
\newcommand{\complex}{\mathbb{C}}
\newcommand{\naturals}{\mathbb{N}}
\newcommand{\bracketb}[1]{\Big[#1\Big]}
\newcommand{\bracketc}[1]{\bigg[#1\bigg]}
\newcommand{\paraa}[1]{\big(#1\big)}
\newcommand{\parab}[1]{\Big(#1\Big)}
\newcommand{\diag}{\operatorname{diag}}
\newcommand{\im}{\operatorname{im}}
\newcommand{\spacearound}[1]{\quad#1\quad}
\renewcommand{\implies}{\spacearound{\Rightarrow}}
\newtheorem{theorem}{Theorem}[section]
\newtheorem{corollary}[theorem]{Corollary}
\newtheorem{lemma}[theorem]{Lemma}
\newtheorem{proposition}[theorem]{Proposition}
\newtheorem{example}[theorem]{Example}
\theoremstyle{definition}
\newtheorem{definition}[theorem]{Definition}
\theoremstyle{remark}
\newtheorem{remark}[theorem]{Remark}
\numberwithin{equation}{section}
\newcommand{\tr}{\operatorname{tr}}
\newcommand{\A}{\mathcal{A}}
\newcommand{\Kfield}{\mathbb{K}}
\renewcommand{\P}{\mathcal{P}}
\newcommand{\Ph}{\hat{\P}}
\newcommand{\R}{\mathcal{R}}
\newcommand{\Rh}{\widehat{\R}}
\newcommand{\gb}{\,\bar{\!g}}
\renewcommand{\d}{\partial}
\newcommand{\eps}{\varepsilon}
\newcommand{\nablab}{\bar{\nabla}}
\renewcommand{\mid}{\mathds{1}}
\newcommand{\D}{\mathcal{D}}
\newcommand{\gi}{\gamma^{-1}}
\newcommand{\gover}{\frac{1}{\gamma}}
\newcommand{\KP}{K\"ahler--Poisson}
\newcommand{\Der}{\operatorname{Der}}
\newcommand{\X}{\mathcal{X}}
\renewcommand{\div}{\operatorname{div}}
\newcommand{\Ric}{\operatorname{Ric}}
\newcommand{\K}{\mathcal{K}}
\newcommand{\TA}{T\mathcal{A}}
\newcommand{\gh}{\hat{g}}
\newcommand{\g}{\mathfrak{g}}
\newcommand{\half}{\frac{1}{2}}
\newcommand{\End}{\operatorname{End}}
\newcommand{\mathand}{\quad\text{ and }\quad}
\newcommand{\Vt}{\tilde{V}}
\newcommand{\Afin}{\A_{\textrm{fin}}}
\newcommand{\Phind}[2]{{\Ph^{#1}}{}_{#2}}
\title{K\"ahler--Poisson algebras}
\author{Joakim Arnlind}
\address[Joakim Arnlind]{Dept. of Math.\\
Link\"oping University\\
581 83 Link\"oping\\
Sweden}
\email{joakim.arnlind@liu.se}
\author{Ahmed Al-Shujary}
\address[Ahmed Al-Shujary]{Dept. of Math.\\
Link\"oping University\\
581 83 Link\"oping\\
Sweden}
\email{ahmed.al-shujary@liu.se}
\thanks{}
\subjclass[2000]{}
\keywords{}
\begin{document}

\begin{abstract}
  We introduce \KP\ algebras as analogues of algebras of smooth
  functions on K\"ahler manifolds, and prove that they share
  several properties with their classical counterparts on an algebraic
  level. For instance, the module of inner derivations of a \KP\
  algebra is a finitely generated projective module, and allows for a
  unique metric and torsion-free connection whose curvature enjoys all
  the classical symmetries. Moreover, starting from a large class of
  Poisson algebras, we show that every algebra has an associated \KP\
  algebra constructed as a localization. At the end, detailed
  examples are provided in order to illustrate the novel concepts.
\end{abstract}

\maketitle

\section{Introduction}\label{sec:introduction}

\noindent
Poisson manifolds and their geometry have been of great interest over
the last decades. Besides from being important from a purely
mathematical point of view, they are also fundamental to areas in
mathematical and theoretical physics. Many authors have studied the
geometric and algebraic properties of symplectic and Poisson manifolds
together in relation to concepts such as connections, local structure
and cohomology (see
e.g. \cite{l:poisson.lie,a:local.structure.poisson,b:differential.complex.poisson,h:poisson.cohomology}). Moreover,
there is a well developed field of deformations of Poisson structures,
perhaps most famous through Kontsevich's result on the existence of
formal deformations \cite{k:deformation.quantization}.  The ring of
smooth functions on a Poisson manifold is a Poisson algebra and it
seems quite natural to ask to what extent geometric properties and
concepts may be introduced in an arbitrary Poisson algebra, without
making reference to an underlying manifold. The methods of algebraic
geometry can readily be extended to Poisson algebras (see
e.g. \cite{b:poisson.algebraic.geometry}); however, this will not be
directly relevant to us as we shall start by focusing on metric
aspects.  Our work is mainly motivated by the results in
\cite{ahh:multi.linear,ah:pseudo.riemannian}, where it is shown that
one may reformulate the Riemannian geometry of an embedded K\"ahler
manifold $M$ entirely in terms of the Poisson structure on the algebra
smooth functions of $M$. Let us also mention that the starting point
of our approach is quite similar to that of
\cite{h:poisson.cohomology} (although metric aspects were not
considered there).

In this note, we show that any Poisson algebra, fulfilling an ``almost
K\"ahler condition'', enjoys many properties similar to those of the
algebra of smooth functions on an almost K\"ahler manifold, opening up
for a more metric treatment of Poisson algebras. Such algebras will be
called ``\KP\ algebras'', and we show that one may associate a \KP\
algebra to every algebra in a large class of Poisson algebras.  In
particular, we prove the existence of a unique Levi-Civita connection
on the module generated by the inner derivations, and show that the
curvature operator has all the classical symmetries. As our approach
is quite close to the theory of Lie-Rinehart algebras, we start by
introducing metric Lie-Rinehart algebras and recall a few results on
the Levi-Civita connection and the corresponding curvature.

In physics, the dynamics of quantum systems are found by using a
correspondence between Poisson brackets of functions on the classical
manifold, and the commutator of operators in the quantum system. Thus,
understanding how properties of the underlying manifold may be
expressed in Poisson algebraic terms enables both interpretation and
definition of quantum mechanical quantities. For instance, this has
been used in the context of matrix models to identify emergent
geometry
(cf. \cite{bs:curvature.gravity.matrix.models,ahh:multi.linear}).

Let us briefly outline the contents of the paper. In
Section~\ref{sec:poisson.algebraic.geometry} we recall a few the
results from \cite{ah:pseudo.riemannian}, in order to motivate and
understand the introduction of a K\"ahler type condition for Poisson
algebras, and Section~\ref{sec:metric.Lie.Rinehart} explains how the
theory of Lie-Rinehart algebras can be extended to include metric
aspects. In Section \ref{sec:kpalgebras}, we define \KP\ algebras and
investigate their basic properties as well as showing that one may
associate a \KP\ algebra to an arbitrary Poisson algebra in a large
class of algebras.  In Section~\ref{sec:levi.civita} we derive a
compact formula for the Levi-Civita connection as well as introducing
Ricci and scalar curvature. Section~\ref{sec:examples} presents a
number of examples together with a few detailed computations.

\begin{remark}
  We have become aware of the fact that the terminology \emph{\KP\
    structure} (resp. \emph{\KP\ manifold}) is used for certain
  Poisson structures on a complex manifold where the Poisson bivector
  is of type $(1,1)$ (see
  e.g. \cite{k:covariant.quantization.separation}), but we hope that
  this will not be a source of confusion for the reader.
\end{remark}

\section{Poisson algebraic formulation of almost K\"ahler manifolds}
\label{sec:poisson.algebraic.geometry}

\noindent In \cite{ah:pseudo.riemannian} it was shown that the
geometry of embedded almost K\"ahler manifolds can be reformulated
entirely in the Poisson algebra of smooth functions.  As we shall
develop an algebraic analogue of this fact, let us briefly recall the
main construction.

Let $(\Sigma,\omega)$ denote a $n$-dimensional symplectic manifold and
let $g$ be a metric on $\Sigma$. Furthermore, let us assume that
$x:(\Sigma,g)\to(\reals^m,\gb)$ is an isometric embedding of $\Sigma$
into $\reals^m$ (with the metric $\gb$), and write
\begin{align*}
  p\to x(p)=\paraa{x^1(p),x^2(p),\ldots,x^m(p)}.
\end{align*}
The results in \cite{ah:pseudo.riemannian} state that the Riemannian
geometry of $\Sigma$ may be formulated in terms of the Poisson algebra
generated by the embedding coordinates $x^1,\ldots,x^m$. These results
hold true as long as there exists a non-zero function
$\gamma\in C^\infty(\Sigma)$ such that
\begin{align}\label{eq:gthetarel}
  \gamma^2g^{ab} = \theta^{ap}\theta^{bq}g_{pq}
\end{align}
where $\theta^{ab}$ and $g_{ab}$ denote the components of the Poisson
bivector and the metric in local coordinates $\{u^a\}_{a=1}^n$,
respectively. If $(\Sigma,\omega,g)$ is an almost K\"ahler manifold
then it follows from the compatibility condition
$\omega(X,Y)=g(X,J(Y))$ (where $J$ denotes the almost complex
structure on $\Sigma$) that relation \eqref{eq:gthetarel} holds with
$\gamma=1$. In local coordinates, the isometric embedding is
characterized by
\begin{align*}
  g_{ab} = \gb_{ij}\paraa{\d_ax^i}\paraa{\d_bx^j},
\end{align*}
and the Poisson bracket is computed as
\begin{align*}
  \{f,h\} = \theta^{ab}(\d_af)(\d_bh).
\end{align*}
Note that in the above and following formulas, indices $i,j,k,\ldots$
run from $1$ to $m$ and indices $a,b,c,\ldots$ run from $1$ to $n$.

Defining $\D:T_p\reals^m\to T_p\reals^m$ as
\begin{align*}
  \D(X) \equiv {\D^i}_jX^j\d_i=\frac{1}{\gamma^2}\{x^i,x^k\}\gb_{kl}\{x^j,x^l\}\gb_{jm}X^m\d_i
\end{align*}
for $X=X^i\d_i\in T_p\reals^m$, one computes
\begin{align*}
  \D(X)^i &= \frac{1}{\gamma^2}\theta^{ab}(\d_ax^i)(\d_bx^k)\gb_{kl}
            \theta^{pq}(\d_px^j)(\d_qx^l)\gb_{jm}X^m\\
          &=\frac{1}{\gamma^2}\theta^{ab}\theta^{pq}g_{bq}(\d_ax^i)(\d_px^j)\gb_{jm}X^m
            =g^{ap}(\d_ax^i)(\d_px^j)\gb_{jm}X^m,
\end{align*}
by using \eqref{eq:gthetarel}.  Hence, the map $\D$ is identified as
the orthogonal projection onto $T_p\Sigma$, seen as a subspace of
$T_p\reals^m$. Having the projection operator at hand, one may
directly proceed to develop the theory of submanifolds. For instance,
the Levi-Civita connection $\nabla$ on $\Sigma$ is given by
\begin{align*}
  \nabla_XY = \D\paraa{\nablab_XY}
\end{align*}
where $X,Y\in\Gamma(T\Sigma)$ and $\nablab$ is the Levi-Civita
connection on $(\reals^m,\gb)$. In the particular case (but
generically applicable, by Nash's theorem \cite{nash:imbedding}) when
$\gb$ is the Euclidean metric, the above formula reduces to
\begin{align*}
  \nabla_XY^i = \frac{1}{\gamma^4}\sum_{i,j,k,l,n=1}^m
  \{x^i,x^k\}\{x^j,x^k\}X^l\{x^l,x^n\}\{Y^j,x^n\}.
\end{align*}

\noindent As we intend to develop an analogous theory for Poisson
algebras, without any reference to a manifold, we would like to
reformulate \eqref{eq:gthetarel} in terms of Poisson algebraic
expressions. Using that $g_{ab}=\gb_{ij}(\d_ax^i)(\d_bx^j)$ and
$\{x^i,x^k\}=\theta^{ab}(\d_ax^i)(\d_bx^j)$, one derives
\begin{align*}
  &\gamma^2g^{ab}=\theta^{ap}\theta^{bq}g_{pq}\implies
  \gamma^2\delta^a_c=\theta^{ap}\theta^{bq}g_{pq}g_{bc}\implies
  \gamma^2\theta^{ar} =
  \theta^{ap}\theta^{bq}g_{pq}g_{bc}\theta^{cr}\\
  &\implies
  \gamma^2\{x^i,x^j\} =
  (\d_ax^i)(\d_rx^j)\theta^{ap}\theta^{bq}\theta^{cr}
  \gb_{kl}(\d_px^k)(\d_qx^l)\gb_{mn}(\d_bx^m)(\d_cx^n)\\
  &\implies
  \gamma^2\{x^i,x^j\} = -\{x^i,x^k\}\gb_{kl}\{x^l,x^n\}\gb_{nm}\{x^m,x^j\}
\end{align*}
which is equivalent to the statement that
\begin{align}
  \label{eq:kpcond.f.h}
  \gamma^2\{f,h\} = -\{f,x^i\}\gb_{ij}\{x^j,x^k\}\gb_{kl}\{x^l,h\}
\end{align}
for all $f,h\in C^\infty(\Sigma)$. Given $\gamma^2$, $\gb_{ij}$ and
$x^1,\ldots,x^m$, the above equation makes sense in an arbitrary
Poisson algebra. The main purpose of this paper is to study algebras
which satisfy such a relation.

\section{Metric Lie-Rinehart algebras}\label{sec:metric.Lie.Rinehart}

\noindent
The idea of modeling the algebraic structures of differential geometry
in a commutative algebra is quite old. We shall follow a pedestrian
approach, were we assume that a (commutative) algebra $\A$ is given
(corresponding to the algebra of functions), together with an
$\A$-module $\g$ (corresponding to the module of vector fields) which
is also a Lie algebra and has an action on $\A$ as derivations. Under
appropriate assumptions on the ingoing objects, such systems has been
studied by many authors over the years, see e.g
\cite{h:pseudo.algebre.lie,k:fibre.bundles,p:cohomology.lie.rings,%
  r:differential.forms,n:tensoranalysis,h:poisson.cohomology}. Our
starting point is the definition given by G. Rinehart
\cite{r:differential.forms}. In the following, we let the field $\Kfield$
denote either $\reals$ or $\complex$.

\begin{definition}[Lie-Rinehart algebra]\label{def:Lie.Rinehart.algebra}
  Let $\A$ be a commutative $\Kfield$-algebra and let $\g$ be an
  $\A$-module which is also a Lie algebra over $\Kfield$. Given a map
  $\omega:\g\to\Der(\A)$, the pair $(\A,\g)$ is called a
  \emph{Lie-Rinehart} algebra if
  \begin{align}
    &\omega(a\alpha)(b) = a\paraa{\omega(\alpha)(b)}\\
    &[\alpha,a\beta] = a[\alpha,\beta]+\paraa{\omega(\alpha)(a)}\beta,
  \end{align}
  for $\alpha,\beta\in\g$ and $a,b\in\A$. (In most cases, we will
  leave out $\omega$ and write $\alpha(a)$ instead of
  $\omega(\alpha)(a)$.)
\end{definition}

\noindent
Let us point out some immediate examples of Lie-Rinehart algebras.

\begin{example}
  Let $\A$ be an algebra and let $\g=\Der(\A)$ be the $\A$-module
  of derivations of $\A$. It is easy to check that $\Der(\A)$ is a
  Lie algebra with respect to composition of derivations, i.e.
  \begin{align*}
    [\alpha,\beta](a) = \alpha(\beta(a))-\beta(\alpha(a)).
  \end{align*}
  The pair $(\A,\Der(\A))$ is a Lie-Rinehart algebra with respect to
  the action of elements of $\Der(\A)$ as derivations.
\end{example}

\begin{example}
  Let $\A=C^\infty(M)$ be the algebra (over $\reals$) of smooth
  functions on a manifold $M$, and let $\g=\X(\A)$ be the $\A$-module of
  vector fields on $M$. With respect to the standard action of a
  vector field as a derivation of $C^\infty(M)$, the pair
  $(C^\infty(M),\X(\A))$ is a Lie-Rinehart algebra. 
\end{example}

\noindent 
Morphisms of Lie-Rinehart algebras are defined as follows.

\begin{definition}
  Let $(\A_1,\g_1)$ and $(\A_2,\g_2)$ be Lie-Rinehart algebras. A
  \emph{morphism} of Lie-Rinehart algebras is a pair of maps
  $(\phi,\psi)$, with $\phi:\A_1\to \A_2$ an algebra homomorphism
  and $\psi:\g_1\to\g_2$ a Lie algebra homomorphism, such
  that 
  \begin{align*}
    \psi(a\alpha) = \phi(a)\psi(\alpha)\quad\text{and}\quad
    \phi\paraa{\alpha(a)} = \psi(\alpha)\paraa{\phi(a)},
  \end{align*}
  for all $a\in\A_1$ and $\alpha\in\g_1$.
\end{definition}

\noindent
A lot of attention has been given to the cohomology of the
Chevalley--Eilenberg complex consisting of alternating
$\A$-multilinear maps with values in a module $M$. Namely, defining
$C^k(\g,M)$ to be the $\A$-module of alternating maps from $\g^k$ to
an $(\A,\g)$-module $M$, on introduces the standard differential
$d:C^k(\g,M)\to C^{k+1}(\g,M)$ as
\begin{equation}
  \label{eq:CE.differential}
  \begin{split}
    d\tau(\alpha_1,\ldots,\alpha_{k+1}) &= 
    \sum_{i=1}^{k+1}(-1)^{i+1}\alpha_i\paraa{\tau(\alpha_1,\ldots,\hat{\alpha}_i,\ldots,\alpha_{k+1})}\\
    &\quad +
  \sum_{i<j}^{k+1}(-1)^{i+j}\tau\paraa{[\alpha_i,\alpha_j],\alpha_1,
    \ldots,\hat{\alpha}_i,\ldots,\hat{\alpha}_j,\ldots,\alpha_{k+1}},
  \end{split}
\end{equation}
where $\hat{\alpha}_i$ indicates that $\alpha_i$ is \emph{not} present
among the arguments. The fact that $d\circ d=0$ implies that one can
construct the cohomology of this complex in analogy with de Rahm
cohomology of smooth manifolds.  However, as we shall be more
interested in Riemannian aspects, it is natural to study the case when
there exists a metric on the module $\g$. More precisely, we make the
following definition.

\begin{definition}\label{def:metric}
  Let $(\A,\g)$ be a Lie-Rinehart algebra and let $M$ be an
  $\A$-module. An $\A$-bilinear form $g:M\times M\to\A$ is
  called a \emph{metric on $M$} if it holds that
  \begin{enumerate}
  \item $g(m_1,m_2)=g(m_2,m_1)$ for all $m_1,m_2\in M$,
  \item the map $\gh:M\to M^\ast$, given by
    $\paraa{\gh(m_1)}(m_2)=g(m_1,m_2)$, is an $\A$-module isomorphism,\label{def.metric.iso}
  \end{enumerate}
  where $M^\ast$ denotes the dual of $M$. We shall often refer to
  property (\ref{def.metric.iso}) as the metric being \emph{non-degenerate}.
\end{definition}

\begin{definition}
  A \emph{metric Lie-Rinehart algebra $(\A,\g,g)$} is a Lie-Rinehart
  algebra $(\A,\g)$ together with a metric $g:\g\times\g\to\A$.
\end{definition}

\noindent
Let us introduce morphisms of metric Lie-Rinehart algebras as
morphisms of Lie-Rinehart algebras that preserve the metric.

\begin{definition}
  Let $(\A_1,\g_1,g_1)$ and $(\A_2,\g_2,g_2)$ be metric Lie-Rinehart
  algebras. A \emph{morphism of metric Lie-Rinehart algebras} is a
  morphism of Lie-Rinehart algebras
  $(\phi,\psi):(\A_1,\g_1)\to(\A_2,\g_2)$ such that
  \begin{align*}
    \phi\paraa{g_1(\alpha,\beta)} = g_2\paraa{\psi(\alpha),\psi(\beta)}
  \end{align*}
  for all $\alpha,\beta\in\g_1$.
\end{definition}

\noindent
The theory of affine connections can readily be introduced, together
with torsion-freeness and metric compatibility.

\begin{definition}\label{def:Lie.Rinehart.connection}
  Let $(\A,\g)$ be a Lie-Rinehart algebra and let $M$ be an
  $\A$-module. A \emph{connection $\nabla$ on $M$} is a map
  $\nabla:\g\to\End_{\Kfield}(M)$, written as
  $\alpha\to\nabla_\alpha$, such that
  \begin{enumerate}
  \item $\nabla_{a\alpha+\beta}=a\nabla_\alpha+\nabla_\beta$
  \item $\nabla_\alpha(am) = a\nabla_\alpha m+\alpha(a)m$
  \end{enumerate}
  for all $a\in\A$, $\alpha,\beta\in\g$ and $m\in M$.
\end{definition}

\begin{definition}
  Let $(\A,\g)$ be a Lie-Rinehart algebra and let $M$ be an
  $\A$-module with connection $\nabla$ and metric $g$. The connection is called
  \emph{metric} if 
  \begin{align}
    \alpha\paraa{g(m_1,m_2)} = g(\nabla_\alpha
    m_1,m_2)+g(m_1,\nabla_\alpha m_2)
  \end{align}
  for all $\alpha\in\g$ and $m_1,m_2\in M$.
\end{definition}

\begin{definition}
  Let $(\A,\g)$ be a Lie-Rinehart algebra and let $\nabla$ be a
  connection on $\g$. The connection is called \emph{torsion-free} if
  \begin{align*}
    \nabla_\alpha\beta-\nabla_\beta\alpha-[\alpha,\beta] = 0
  \end{align*}
  for all $\alpha,\beta\in\g$.
\end{definition}

\noindent
As in differential geometry, one can show that there exists a unique
torsion-free and metric connection associated to the Riemannian
metric. The first step involves proving Kozul's formula.

\begin{proposition}\label{prop:Kozul.formula}
  Let $(\A,\g,g)$ be a metric Lie-Rinehart algebra. If $\nabla$ is a
  metric and torsion-free connection on $\g$ then it holds that
  \begin{equation}
    \begin{split}\label{eq:Kozul.formula}
      2g\paraa{\nabla_\alpha\beta,\gamma} = 
      \alpha\paraa{&g(\beta,\gamma)}+\beta\paraa{g(\gamma,\alpha)}-\gamma\paraa{g(\alpha,\beta)}\\
      &+g(\beta,[\gamma,\alpha])+g(\gamma,[\alpha,\beta])-g(\alpha,[\beta,\gamma])
    \end{split}
  \end{equation}  
  for all $\alpha,\beta,\gamma\in\g$.
\end{proposition}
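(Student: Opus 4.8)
The plan is to derive Koszul's formula by the classical cyclic-sum computation, now carried out purely algebraically using the two defining properties of a connection (Definition~\ref{def:Lie.Rinehart.connection}), together with the metric-compatibility and torsion-free conditions. The key observation is that in a Lie-Rinehart algebra the action of $\alpha\in\g$ on the scalar $g(\beta,\gamma)\in\A$ is a derivation, so metric compatibility lets me rewrite each term $\alpha\paraa{g(\beta,\gamma)}$ as a sum of two terms involving $\nabla$. This is the only place the module/algebra structure enters, and it is exactly the analogue of the differential-geometric identity $X(g(Y,Z))=g(\nabla_X Y,Z)+g(Y,\nabla_X Z)$.

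First I would write down the three cyclic instances of the metric-compatibility equation:
\begin{align*}
  \alpha\paraa{g(\beta,\gamma)} &= g(\nabla_\alpha\beta,\gamma)+g(\beta,\nabla_\alpha\gamma),\\
  \beta\paraa{g(\gamma,\alpha)} &= g(\nabla_\beta\gamma,\alpha)+g(\gamma,\nabla_\beta\alpha),\\
  \gamma\paraa{g(\alpha,\beta)} &= g(\nabla_\gamma\alpha,\beta)+g(\alpha,\nabla_\gamma\beta).
\end{align*}
Then I would form the combination (first)$+$(second)$-$(third), matching the right-hand side of \eqref{eq:Kozul.formula}. Collecting the six resulting $g(\cdot,\cdot)$ terms and using the symmetry $g(m_1,m_2)=g(m_2,m_1)$ from Definition~\ref{def:metric}, four of them pair up into the term $2g(\nabla_\alpha\beta,\gamma)$ that I want, while the remaining four organize into expressions of the form $g\paraa{\nabla_\alpha\beta-\nabla_\beta\alpha,\gamma}$ and similar.

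The decisive step is then to apply the torsion-free condition $\nabla_\mu\nu-\nabla_\nu\mu=[\mu,\nu]$ to each leftover pair: this converts every residual difference of covariant derivatives into a Lie bracket, producing exactly the three bracket terms $g(\beta,[\gamma,\alpha])+g(\gamma,[\alpha,\beta])-g(\alpha,[\beta,\gamma])$ on the right-hand side. I do not anticipate a genuine obstacle here, since every ingredient is an identity rather than something requiring existence or estimates; the proof is a bookkeeping exercise. The one point demanding care is the sign and symmetry accounting when matching the six metric terms against the four target terms, because a single misplaced use of symmetry or torsion-freeness flips a sign. I would therefore organize the calculation so that the three cyclic substitutions are displayed explicitly and the cancellations are verified term by term, ensuring that precisely the combination in \eqref{eq:Kozul.formula} survives.
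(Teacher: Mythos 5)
Your proposal is correct and takes essentially the same approach as the paper's proof: both are the classical cyclic-sum Koszul computation, using metric compatibility to expand the three derivative terms and the torsion-free condition to handle the bracket terms, differing only in direction (you build up the right-hand side, the paper expands it). The sign bookkeeping you flag does work out: the combination $\alpha\paraa{g(\beta,\gamma)}+\beta\paraa{g(\gamma,\alpha)}-\gamma\paraa{g(\alpha,\beta)}$ expands to $2g(\nabla_\alpha\beta,\gamma)-g(\gamma,[\alpha,\beta])-g(\beta,[\gamma,\alpha])+g(\alpha,[\beta,\gamma])$, which rearranges to \eqref{eq:Kozul.formula}.
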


\begin{proof}
  Starting from the right-hand-side of \eqref{eq:Kozul.formula} and using
  the metric condition to rewrite the first three terms as
  \begin{align*}
    \alpha\paraa{g(\beta,\gamma)} = g(\nabla_\alpha \beta,\gamma) + g(\beta,\nabla_\alpha\gamma),
  \end{align*}
  together with the torsion-free condition to rewrite the last three
  terms as
  \begin{align*}
    g(\beta,[\gamma,\alpha]) = g(\beta,\nabla_\gamma\alpha)-g(\beta,\nabla_\alpha\gamma),
  \end{align*}
  immediately gives $2g(\nabla_\alpha \beta,\gamma)$.
\end{proof}

\noindent
By using Proposition~\ref{prop:Kozul.formula} together with the fact
that the metric is non-degenerate, one obtains the following result.

\begin{proposition}\label{prop:levi.civita.thm}
  Let $(\A,\g,g)$ be a metric Lie-Rinehart algebra. Then there exists a
  unique metric and torsion-free connection on $\g$.
\end{proposition}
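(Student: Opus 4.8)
The plan is to extract both halves of the statement from the Kozul formula of Proposition~\ref{prop:Kozul.formula}. \emph{Uniqueness} is immediate: the right-hand side of \eqref{eq:Kozul.formula} is built solely from the metric $g$, the Lie bracket, and the derivation action on $\A$, none of which refer to the connection. Hence if $\nabla$ and $\nabla'$ are both metric and torsion-free, then $g(\nabla_\alpha\beta,\gamma)=g(\nabla'_\alpha\beta,\gamma)$ for every $\gamma\in\g$, so $g(\nabla_\alpha\beta-\nabla'_\alpha\beta,\gamma)=0$ for all $\gamma$. Non-degeneracy makes $\gh\colon\g\to\g^\ast$ an isomorphism, in particular injective, which forces $\nabla_\alpha\beta=\nabla'_\alpha\beta$.

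For \emph{existence}, I would turn the Kozul formula into a definition. Fixing $\alpha,\beta\in\g$, let $F_{\alpha,\beta}\colon\g\to\A$ send $\gamma$ to one half of the right-hand side of \eqref{eq:Kozul.formula}. Additivity of $F_{\alpha,\beta}$ in $\gamma$ is clear, so the crux is $\A$-homogeneity. Substituting $a\gamma$ for $\gamma$ and expanding, the two derivation terms $\alpha\paraa{g(\beta,a\gamma)}$ and $\beta\paraa{g(a\gamma,\alpha)}$ each produce a Leibniz correction, while the bracket terms produce corrections through the Lie-Rinehart identity $[\alpha,a\gamma]=a[\alpha,\gamma]+\paraa{\alpha(a)}\gamma$. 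Tracking these, the $\alpha(a)$-terms arising from $\alpha\paraa{g(\beta,\gamma)}$ and from $g(\beta,[a\gamma,\alpha])$ cancel, and the $\beta(a)$-terms arising from $\beta\paraa{g(\gamma,\alpha)}$ and from $g(\alpha,[\beta,a\gamma])$ cancel using the symmetry $g(\gamma,\alpha)=g(\alpha,\gamma)$. Thus $F_{\alpha,\beta}(a\gamma)=aF_{\alpha,\beta}(\gamma)$, so $F_{\alpha,\beta}\in\g^\ast$, and the isomorphism $\gh$ lets me define $\nabla_\alpha\beta\equiv\gh^{-1}(F_{\alpha,\beta})$, i.e.\ the unique element of $\g$ with $g(\nabla_\alpha\beta,\gamma)=F_{\alpha,\beta}(\gamma)$ for all $\gamma$.

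It then remains to verify that this $\nabla$ is a metric, torsion-free connection, which in each case I would reduce to an identity of functionals of an auxiliary argument $\gamma$ and conclude via non-degeneracy. For the axioms of Definition~\ref{def:Lie.Rinehart.connection} I check that $F_{a\alpha_1+\alpha_2,\beta}=aF_{\alpha_1,\beta}+F_{\alpha_2,\beta}$ and that $F_{\alpha,a\beta}(\gamma)=aF_{\alpha,\beta}(\gamma)+\paraa{\alpha(a)}g(\beta,\gamma)$, the latter reproducing $\nabla_\alpha(a\beta)=a\nabla_\alpha\beta+\alpha(a)\beta$. Torsion-freeness follows by forming $F_{\alpha,\beta}(\gamma)-F_{\beta,\alpha}(\gamma)$ and checking it equals $g([\alpha,\beta],\gamma)$, and metric compatibility by forming $F_{\alpha,\beta}(\gamma)+F_{\alpha,\gamma}(\beta)$ and checking it equals $\alpha\paraa{g(\beta,\gamma)}$; both are just reorganizations of the six terms of \eqref{eq:Kozul.formula}, reversing the manipulations in the proof of Proposition~\ref{prop:Kozul.formula}.

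The only genuinely substantive step is the $\A$-linearity of $F_{\alpha,\beta}$ in $\gamma$, since without it $\nabla_\alpha\beta$ would not be defined through the metric isomorphism; everything afterwards is formal bookkeeping of derivation and bracket terms. I expect this to be the main obstacle, and it is resolved precisely by the symmetry of $g$ together with the Lie-Rinehart compatibility between the bracket and the module structure.
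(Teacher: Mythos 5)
Your proposal is correct and follows essentially the same route as the paper: define $\nabla_\alpha\beta$ as the preimage under $\gh$ of the functional given by the right-hand side of Kozul's formula \eqref{eq:Kozul.formula}, verify the connection axioms, torsion-freeness and metric compatibility from that formula, and deduce uniqueness from non-degeneracy together with Proposition~\ref{prop:Kozul.formula}. The only difference is that you spell out the $\A$-linearity of the functional in $\gamma$ (which the paper asserts implicitly when writing $\omega\in\g^\ast$), and your cancellation of the $\alpha(a)$- and $\beta(a)$-terms there is correct.
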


\begin{remark}
  The unique connection in Proposition~\ref{prop:levi.civita.thm}
  will be referred to as the \emph{Levi-Civita connection} of a metric
  Lie-Rinehart algebra. 
\end{remark}

\begin{proof}
  For every $\alpha,\beta\in\g$, the right-hand-side of
  \eqref{eq:Kozul.formula} defines a linear form $\omega\in\g^\ast$
  \begin{align*}
      \omega(\gamma) = 
      \tfrac{1}{2}\alpha\paraa{&g(\beta,\gamma)}+\tfrac{1}{2}\beta\paraa{g(\gamma,\alpha)}-\tfrac{1}{2}\gamma\paraa{g(\alpha,\beta)}\\
      &+\tfrac{1}{2}g(\beta,[\gamma,\alpha])+\tfrac{1}{2}g(\gamma,[\alpha,\beta])-\tfrac{1}{2}g(\alpha,[\beta,\gamma]).
  \end{align*}
  By assumption (see Definition~\ref{def:metric}), the metric induces
  an isomorphism map $\hat{g}:\g\to\g^\ast$, which implies that there
  exists an element $\nabla_\alpha \beta=\gh^{-1}(\omega)\in\g$ such
  that $g(\nabla_\alpha \beta,\gamma)=\omega(\gamma)$. This shows that
  $\nabla_\alpha \beta$ exists for all $\alpha,\beta\in\g$ such that
  relation \eqref{eq:Kozul.formula} is satisfied. Next, let us show
  that $\nabla$ defines a connection on $\g$, which amounts to
  checking the four properties in
  Definition~\ref{def:Lie.Rinehart.connection}. This is a straight-forward
  computation using \eqref{eq:Kozul.formula} and the fact that, for
  instance,
  \begin{align*}
    g(\nabla_{a\alpha}\beta,\gamma)=g(a\nabla_\alpha \beta,\gamma)\qquad\text{for all } \gamma\in\g
  \end{align*}
  implies that $\nabla_{a\alpha}\beta=a\nabla_\alpha \beta$ since the
  metric is non-degenerate. Let us illustrate the computation with the
  following example. From \eqref{eq:Kozul.formula} it follows that
  \begin{align*}
    2g(&\nabla_{a\alpha}\beta,\gamma) = a\alpha\paraa{g(\beta,\gamma)}+\beta\paraa{g(\gamma,a\alpha)}-\gamma\paraa{g(a\alpha,\beta)}\\
      &\qquad\qquad\qquad+g(\beta,[\gamma,a\alpha])+g(\gamma,[a\alpha,\beta])-g(a\alpha,[\beta,\gamma])\\
      &= a\alpha\paraa{g(\beta,\gamma)} + a\beta\paraa{g(\gamma,\alpha)} + \beta(a)g(\gamma,\alpha)
      -a\gamma\paraa{g(\alpha,\beta)}-\gamma(a)g(\alpha,\beta)\\
      &\quad
      +g(\beta,\gamma(a)\alpha+a[\gamma,\alpha])
      +g(\gamma,-\beta(a)\alpha+a[\alpha,\beta])
      -ag(\alpha,[\beta,\gamma])\\
      &=2ag(\nabla_\alpha \beta,\gamma) + \beta(a)g(\gamma,\alpha) - \gamma(a)g(\alpha,\beta)
      +\gamma(a)g(\beta,\alpha)-\beta(a)g(\gamma,\alpha)\\
      &=2ag(\nabla_\alpha \beta,\gamma).
  \end{align*}
  The remaining properties of a connection is proved in an analogous
  way. To show that $\nabla$ is metric, one again uses
  \eqref{eq:Kozul.formula} to substitute $g(\nabla_\alpha
  \beta,\gamma)$ and $g(\beta,\nabla_\alpha \gamma)$ and find that
  \begin{align*}
    \alpha\paraa{g(\beta,\gamma)}-g(\nabla_\alpha \beta,\gamma)-g(\beta,\nabla_\alpha \gamma) = 0.
  \end{align*}
  That the torsion-free condition holds follows from
  \begin{align*}
    g(\nabla_\alpha \beta,\gamma)-g(\nabla_\beta\alpha,\gamma)-g([\alpha,\beta],\gamma)=0,
  \end{align*}
  which can be seen using \eqref{eq:Kozul.formula}. Hence, we conclude
  that there exists a metric and torsion-free affine connection
  satisfying \eqref{eq:Kozul.formula}. Moreover, since the metric is
  non-degenerate, such a connection is unique. Finally, as every
  metric and torsion-free connection on $\g$ satisfies
  \eqref{eq:Kozul.formula} (by Proposition~\ref{prop:Kozul.formula})
  we conclude that there exists a unique metric and torsion-free
  connection on $\g$.
\end{proof}

\noindent
In what follows, we shall recall some of the properties satisfied by a
metric and torsion-free connection. The differential geometric proofs
goes through with only a change in notation needed, but we provide
them here for easy reference, and to adapt the formulation to our
particular situation. We refer to
\cite{k:fibre.bundles,n:tensoranalysis} for a nice overview of
differential geometric constructions in modules over general commutative
algebras.

Following the usual definitions, we introduce the
curvature as
\begin{align}
  R(\alpha,\beta)\gamma = \nabla_\alpha\nabla_\beta \gamma-\nabla_\beta\nabla_\alpha \gamma-\nabla_{[\alpha ,\beta]}\gamma
\end{align}
as well as 
\begin{align*}
  &R(\alpha,\beta,\gamma) = R(\alpha,\beta)\gamma\\
  &R(\alpha,\beta,\gamma,\delta) = g(\alpha,R(\gamma,\delta)\beta).
\end{align*}
Let us also consider the extension of $\nabla$ to
multilinear maps $T:\g^k\to\A$ 
\begin{align*}
  (\nabla_\beta T)(\alpha_1,\ldots,\alpha_k)
  =\beta\paraa{T(\alpha_1,\ldots,\alpha_k)}
  -\sum_{i=1}^kT\paraa{\alpha_1,\ldots,\nabla_\beta \alpha_i,\ldots,\alpha_k},
\end{align*}
as well as to $\g$-valued multilinear maps $T:\g^k\to\g$
\begin{align*}
  (\nabla_\beta T)(\alpha_1,\ldots,\alpha_k)
  =\nabla_\beta\paraa{T(\alpha_1,\ldots,\alpha_k)}
  -\sum_{i=1}^kT\paraa{\alpha_1,\ldots,\nabla_\beta \alpha_i,\ldots,\alpha_k}.
\end{align*}

\noindent
As in classical geometry, one proceeds to derive the Bianchi
identities.

\begin{proposition}\label{prop:bianchiIdentities}
  Let $\nabla$ be the Levi-Civita connection of a metric Lie-Rinehart
  algbera $(\A,\g,g)$ and let $R$ denote corresponding curvature. Then
  it holds that
  \begin{align}
    &R(\alpha,\beta,\gamma)+R(\gamma,\alpha,\beta)+R(\beta,\gamma,\alpha) = 0,\label{eq:firstBianchi}\\
    &\paraa{\nabla_\alpha R}(\beta,\gamma,\delta)+\paraa{\nabla_\beta R}(\gamma,\alpha,\delta)+
    \paraa{\nabla_\gamma R}(\alpha,\beta,\delta)=0\label{eq:secondBianchi},
  \end{align}
  for all $\alpha,\beta,\gamma,\delta\in\g$.
\end{proposition}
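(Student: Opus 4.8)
The plan is to prove the two Bianchi identities purely algebraically, mimicking the classical Riemannian proofs but being careful that all manipulations are valid in the Lie-Rinehart setting. The essential tools are the torsion-free condition $\nabla_\alpha\beta-\nabla_\beta\alpha=[\alpha,\beta]$, the Jacobi identity for the Lie bracket on $\g$, and the definition of curvature. **First I would** establish the first Bianchi identity \eqref{eq:firstBianchi}. Writing out the cyclic sum $R(\alpha,\beta)\gamma+R(\gamma,\alpha)\beta+R(\beta,\gamma)\alpha$ using the definition of $R$, each term contributes three pieces. The plan is to group the twelve resulting terms so that the torsion-free condition converts expressions such as $\nabla_\alpha\nabla_\beta\gamma-\nabla_\alpha\nabla_\gamma\beta$ into $\nabla_\alpha[\beta,\gamma]$, and then to collect the remaining terms into $-\nabla_{[\alpha,\beta]}\gamma$-type pieces so that the total reduces to the Jacobi expression
\begin{align*}
  [[\alpha,\beta],\gamma]+[[\gamma,\alpha],\beta]+[[\beta,\gamma],\alpha],
\end{align*}
which vanishes by the Jacobi identity on $\g$. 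I would note that the torsion-free axiom is applied term-by-term and that no issue arises from $\nabla$ being only $\Kfield$-linear rather than $\A$-linear, since every step stays inside $\g$.

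For the second Bianchi identity \eqref{eq:secondBianchi}, the plan is to unwind the definition of $\nabla_\alpha R$ for a $\g$-valued trilinear map $T=R$. **Next I would** expand $(\nabla_\alpha R)(\beta,\gamma,\delta)=\nabla_\alpha\paraa{R(\beta,\gamma)\delta}-R(\nabla_\alpha\beta,\gamma)\delta-R(\beta,\nabla_\alpha\gamma)\delta-R(\beta,\gamma)\nabla_\alpha\delta$ and sum cyclically over $\alpha,\beta,\gamma$. I would reorganize the sum so that the $\delta$-slot contributions and the terms where $\nabla$ differentiates the last argument combine appropriately. The standard device is to observe that the cyclic sum can be rewritten using the first Bianchi identity applied to $R$ together with repeated use of torsion-freeness to trade $\nabla_\alpha\beta-\nabla_\beta\alpha$ for $[\alpha,\beta]$; the curvature terms involving brackets then cancel against the $R(\nabla_\alpha\beta,\gamma)$-type terms after invoking the Jacobi identity once more.

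The main obstacle I anticipate is bookkeeping in the second identity: one must correctly handle the terms $R(\nabla_\alpha\beta,\gamma)\delta$ and $R(\beta,\nabla_\alpha\gamma)\delta$ under the cyclic sum, showing that after antisymmetrizing and applying torsion-freeness they either cancel pairwise or assemble into a Jacobi combination. **The key technical point** is that the cyclic symmetrization turns $\nabla_\alpha\nabla_\beta\nabla_\gamma\delta$-type third-order terms into the cyclic sum of $\nabla_\alpha R(\beta,\gamma)\delta$ minus correction terms, and one verifies that all the correction terms vanish by torsion-freeness and Jacobi. Since the paper explicitly remarks that the differential-geometric proofs carry over with only notational changes, I would present the computation compactly, emphasizing exactly where torsion-freeness and the Jacobi identity enter, rather than writing out all twenty-odd terms, and conclude that both identities hold for all $\alpha,\beta,\gamma,\delta\in\g$.
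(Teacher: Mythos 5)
Your proposal is correct and takes essentially the same route as the paper: the first Bianchi identity via the torsion-free condition (applied twice, once to create $\nabla_\alpha[\beta,\gamma]$ and once to trade $\nabla_\alpha[\beta,\gamma]-\nabla_{[\beta,\gamma]}\alpha$ for $[\alpha,[\beta,\gamma]]$) together with the Jacobi identity on $\g$, and the second by cyclically summing the expansion of $(\nabla_\alpha R)(\beta,\gamma,\delta)$ and cancelling the correction terms — the paper's proof is the same computation read from the cyclically summed identity $R(\nabla_\alpha\beta-\nabla_\beta\alpha-[\alpha,\beta],\gamma,\delta)=0$. Two cosmetic points: the cyclic sum in the first identity yields nine terms rather than twelve, and the first Bianchi identity is not actually needed for the second — the third-order terms cancel by the Jacobi identity for commutators in $\End_{\Kfield}(\g)$ and the bracket terms by the Jacobi identity on $\g$, exactly as in the paper.
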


\begin{proof}
  The first Bianchi identity (\ref{eq:firstBianchi}) is proven by
  acting with $\nabla_\gamma$ on the torsion free condition
  $\nabla_\alpha \beta-\nabla_\beta\alpha-[\alpha,\beta]=0$, and then summing over cyclic
  permutations of $\alpha,\beta,\gamma$. Since $[[\alpha,\beta],\gamma]+[[\beta,\gamma],\alpha]+[[\gamma,\alpha],\beta]=0$,
  the desired result follows.  The second identity is obtained by a
  cyclic permutation (of $\alpha,\beta,\gamma$) in
  $R\paraa{\nabla_\alpha \beta-\nabla_\beta\alpha-[\alpha,\beta],\gamma,\delta}=0$. One has
  \begin{align*}
    0&=R\paraa{\nabla_\alpha \beta-\nabla_\beta\alpha-[\alpha,\beta],\gamma,\delta}+\text{cycl.}\\
    &=R(\nabla_\gamma \alpha,\beta,\delta)+R(\alpha,\nabla_\gamma \beta,\delta)-R([\alpha,\beta],\gamma,\delta) +\text{cycl.}
  \end{align*}
  On the other hand, one has
  \begin{align*}
    (\nabla_\gamma R)(\alpha,\beta,\delta) &= \nabla_\gamma R(\alpha,\beta,\delta) -R(\nabla_\gamma \alpha,\beta,\delta)\\
    &\qquad-R(\alpha,\nabla_\gamma \beta,\delta)
    -R(\alpha,\beta,\nabla_\gamma \delta),
  \end{align*}
  and substituting this into the previous equation yields
  \begin{align*}
    0 = \nabla_\gamma R(\alpha,\beta,\delta)-\paraa{\nabla_\gamma
      R}(\alpha,\beta,\delta)-R(\alpha,\beta,\nabla_\gamma \delta)
    -R([\alpha,\beta],\gamma,\delta)+\text{cycl.}
  \end{align*}
  After inserting the definition of $R$, and using that
  $[[\alpha,\beta],\gamma]+\text{cycl.}=0$, the second Bianchi identity follows.
\end{proof}

\noindent
Finally, one is able to derive the classical symmetries of the curvature tensor.

\begin{proposition}
  Let $\nabla$ be the Levi-Civita connection of a metric Lie-Rinehart
  algbera $(\A,\g,g)$ and let $R$ denote corresponding curvature. Then
  it holds that
  \begin{align}
    &R(\alpha,\beta,\gamma,\delta) = -R(\beta,\alpha,\gamma,\delta) = -R(\alpha,\beta,\delta,\gamma).\label{eq:Rsymmetry1}\\
    &R(\alpha,\beta,\gamma,\delta) = R(\delta,\gamma,\alpha,\beta),\label{eq:Rsymmetry2}
  \end{align}
  for all $\alpha,\beta,\gamma,\delta\in\g$.
\end{proposition}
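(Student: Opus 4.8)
The plan is to handle the two lines separately, since the antisymmetries \eqref{eq:Rsymmetry1} are elementary whereas the pair symmetry \eqref{eq:Rsymmetry2} genuinely requires the first Bianchi identity. For \eqref{eq:Rsymmetry1}, the antisymmetry $R(\alpha,\beta,\gamma,\delta)=-R(\alpha,\beta,\delta,\gamma)$ is immediate from the definition of the curvature: exchanging $\gamma$ and $\delta$ negates $\nabla_\gamma\nabla_\delta-\nabla_\delta\nabla_\gamma$ and sends $\nabla_{[\gamma,\delta]}$ to $\nabla_{[\delta,\gamma]}=-\nabla_{[\gamma,\delta]}$, so that $R(\delta,\gamma)=-R(\gamma,\delta)$ as operators on $\g$; applying $g(\alpha,\cdot)$ gives the claim. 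The remaining antisymmetry $R(\alpha,\beta,\gamma,\delta)=-R(\beta,\alpha,\gamma,\delta)$ is precisely the statement that $R(\gamma,\delta)$ is skew-adjoint with respect to $g$, and this is where metric compatibility is used.

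To prove skew-adjointness I would start from the identity $\bigl(\gamma\delta-\delta\gamma-[\gamma,\delta]\bigr)\paraa{g(\alpha,\beta)}=0$, which holds because this operator annihilates every element of $\A$. Expanding each of the three terms by repeated use of the metric condition $\xi\paraa{g(m_1,m_2)}=g(\nabla_\xi m_1,m_2)+g(m_1,\nabla_\xi m_2)$, the cross terms $g(\nabla_\delta\alpha,\nabla_\gamma\beta)$ and $g(\nabla_\gamma\alpha,\nabla_\delta\beta)$ cancel between the $\gamma\delta$ and $\delta\gamma$ contributions, and, after recognizing the curvature operator, what survives is exactly $g\paraa{R(\gamma,\delta)\alpha,\beta}+g\paraa{\alpha,R(\gamma,\delta)\beta}=0$. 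By symmetry of $g$ this reads $R(\beta,\alpha,\gamma,\delta)+R(\alpha,\beta,\gamma,\delta)=0$, establishing the second antisymmetry and hence all of \eqref{eq:Rsymmetry1}.

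For \eqref{eq:Rsymmetry2} the plan is the classical four-fold-sum argument. Pairing the first Bianchi identity \eqref{eq:firstBianchi} with a fourth element via $g$ yields, for all $\alpha,\beta,\gamma,\delta$, the scalar cyclic relation $R(\alpha,\beta,\gamma,\delta)+R(\alpha,\gamma,\delta,\beta)+R(\alpha,\delta,\beta,\gamma)=0$. I would write this relation down four times, letting each of $\alpha,\beta,\gamma,\delta$ in turn occupy the fixed first slot, and then add two of the resulting identities while subtracting the other two. Every term in this combination can be normalized using the two antisymmetries from \eqref{eq:Rsymmetry1}, after which the twelve terms cancel in pairs and one is left with the pair symmetry \eqref{eq:Rsymmetry2}. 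I expect this final bookkeeping to be the main obstacle: none of the individual cancellations is obvious, and each term must first be brought into a common form via \eqref{eq:Rsymmetry1} before the terms can be matched up. It is worth noting that the argument uses the full structure established earlier, namely torsion-freeness through \eqref{eq:firstBianchi} and metric compatibility through \eqref{eq:Rsymmetry1}.
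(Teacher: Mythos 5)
Your handling of \eqref{eq:Rsymmetry1} coincides with the paper's proof step for step: the last-two-slot antisymmetry read off from the definition of $R$, and the first-two-slot antisymmetry obtained by applying $\gamma\delta-\delta\gamma-[\gamma,\delta]$ to $g(\alpha,\beta)$ and expanding with metric compatibility so that the cross terms $g(\nabla_\delta\alpha,\nabla_\gamma\beta)$, $g(\nabla_\gamma\alpha,\nabla_\delta\beta)$ cancel. For \eqref{eq:Rsymmetry2} you also derive exactly the paper's cyclic relation \eqref{eq:RquadPermSymmetry} from the first Bianchi identity \eqref{eq:firstBianchi}; the only divergence is that the paper then \emph{cites} the standard algebraic fact that \eqref{eq:Rsymmetry1} together with \eqref{eq:RquadPermSymmetry} forces the pair symmetry (referring to Helgason), whereas you propose to inline the proof of that lemma via the classical four-fold-sum argument. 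That argument is sound: writing the cyclic identity with each of $\alpha,\beta,\gamma,\delta$ in the first slot and summing the four rotated instances, eight of the twelve terms cancel in pairs by the slot-$(1,2)$ antisymmetry and the remaining four double up, giving $2R(\alpha,\gamma,\delta,\beta)+2R(\beta,\delta,\alpha,\gamma)=0$; since each instance vanishes identically, your ``add two, subtract two'' sign pattern can also be made to collapse (it produces the equivalent full-reversal form $R(\alpha,\beta,\gamma,\delta)=R(\delta,\gamma,\beta,\alpha)$), so the bookkeeping you worry about is manageable either way.

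One genuine caution, which your explicit computation will expose and the paper's citation hides: in the paper's slot convention $R(\alpha,\beta,\gamma,\delta)=g\paraa{\alpha,R(\gamma,\delta)\beta}$, the four-fold sum yields $R(\alpha,\beta,\gamma,\delta)=R(\gamma,\delta,\alpha,\beta)$, equivalently $R(\delta,\gamma,\beta,\alpha)$, and this differs from the printed \eqref{eq:Rsymmetry2} by a single transposition in the last two slots, hence by a sign. Indeed, for the constant-curvature operator $R(\gamma,\delta)\beta=K\paraa{g(\delta,\beta)\gamma-g(\gamma,\beta)\delta}$ one checks directly that $R(\alpha,\beta,\gamma,\delta)=-R(\delta,\gamma,\alpha,\beta)$, so \eqref{eq:Rsymmetry2} as literally stated cannot hold in general; it is an artifact of the unusual ordering in the definition $R(\alpha,\beta,\gamma,\delta)=g(\alpha,R(\gamma,\delta)\beta)$. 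Your argument, carried out faithfully, proves the corrected form $R(\alpha,\beta,\gamma,\delta)=R(\gamma,\delta,\alpha,\beta)$ --- which is also what the standard lemma invoked by the paper actually delivers --- so do not force your final normalization to match the printed index order.
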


\begin{proof}
  The identity $R(\alpha,\beta,\gamma,\delta)=-R(\alpha,\beta,\delta,\gamma)$ follows immediately from the
  definition of $R$. Let us now prove that
  $R(\alpha,\beta,\gamma,\delta)=-R(\beta,\alpha,\gamma,\delta)$. Starting from $\gamma(\delta(a))-\delta(\gamma(a))-[\gamma,\delta](a)=0$
  and letting $a=g(\alpha,\beta)$ yields
  \begin{align*}
    &\gamma\bracketb{g(\nabla_\delta\alpha,\beta)+g(\alpha,\nabla_\delta\beta)}
    -\delta\bracketb{g(\nabla_\gamma\alpha,\beta)+g(\alpha,\nabla_\gamma\beta)}\\
    &\qquad
    -(\nabla_{[\gamma,\delta]}\alpha,\beta)-(\alpha,\nabla_{[\gamma,\delta]}\beta)=0.
  \end{align*}
  when using that $\nabla$ is a metric connection; i.e
  $\tau(g(\alpha,\beta))=g(\nabla_\tau\alpha,\beta)+g(\alpha,\nabla_\tau\beta)$ for $\tau=\gamma,\delta,[\gamma,\delta]$.
  A further expansion using the metric property gives
  \begin{align*}
    &g(\nabla_\gamma\nabla_\delta\alpha,\beta)+g(\alpha,\nabla_\gamma\nabla_\delta\beta)
    -g(\nabla_\delta\nabla_\gamma \alpha,\beta)-g(\alpha,\nabla_\delta\nabla_\gamma\beta)\\
    &\qquad
    -g(\nabla_{[\gamma,\delta]}\alpha,\beta)-g(\alpha,\nabla_{[\gamma,\delta]}\beta)=0,
  \end{align*}
  which is equivalent to
  \begin{align*}
    g(R(\gamma,\delta)\alpha,\beta) = -g(R(\gamma,\delta)\beta,\alpha).
  \end{align*}
  Next, one can make use of equation (\ref{eq:firstBianchi}) in
  Proposition \ref{prop:bianchiIdentities}, from which it follows that
  \begin{align}
    R(\alpha,\beta,\gamma,\delta)+R(\alpha,\delta,\beta,\gamma)+R(\alpha,\gamma,\delta,\beta) = 0.\label{eq:RquadPermSymmetry}
  \end{align}
  It is a standard algebraic result that
  any quadri-linear map satisfying (\ref{eq:Rsymmetry1}) and
  (\ref{eq:RquadPermSymmetry}) also satisfies (\ref{eq:Rsymmetry2})
  (see e.g. \cite{h:diffsymspaces}).
\end{proof}

\section{\KP\ algebras}\label{sec:kpalgebras}

\noindent In this section, we shall introduce a type of Poisson
algebras, that resembles the smooth functions on an (isometrically)
embedded almost K\"ahler manifold, in such a way that an analogue of
Riemannian geometry may be developed. Namely, let us consider a unital
Poisson algebra $(\A,\{\cdot,\cdot\})$ and let $\{x^1,\ldots,x^m\}$ be
a set of distinguished elements of $\A$, corresponding to functions
providing an embedding into $\reals^m$, in the geometrical case. One
may also consider the setting of algebraic (Poisson) varieties where
$\A$ is a finitely generated Poisson algebra and $\{x^1,\ldots,x^m\}$
denotes a set of generators. Our aim is to introduce equation
\eqref{eq:kpcond.f.h} in $\A$ and investigate just how far one may
take the analogy with Riemannian geometry.  After introducing
\emph{\KP\ algebras} below, we will show that they are, in a natural
way, metric Lie-Rinehart algebras, which implies that the results of
Section~\ref{sec:metric.Lie.Rinehart} can be applied; in particular,
there exists a unique torsion-free metric connection on every \KP\
algebra. Note that Lie-Rinehart algebras related to Poisson algebras
have extensively been studied by Huebschmann (see
e.g. \cite{h:poisson.cohomology,h:exentsions.lie.rinehart}).

In Section~\ref{sec:poisson.algebraic.geometry} it was shown that the
following identity holds on an almost K\"ahler manifold:
\begin{align}
  \gamma^2\{f,h\} = -\{f,x^i\}\gb_{ij}\{x^j,x^k\}\gb_{kl}\{x^l,h\}\tag{\ref{eq:kpcond.f.h}}.
\end{align}
This equation is well-defined in a Poisson algebra, and we shall use
it to define the main object of our investigation.

\begin{definition}
  Let $\A$ be a Poisson algebra over $\Kfield$ and let
  $\{x^1,\ldots,x^m\}\subseteq\A$. Given $g_{ij}\in\A$, for
  $i,j=1,\ldots,m$, such that $g_{ij}=g_{ji}$, we say that the triple
  $\K=\paraa{\A,\{x^1,\ldots,x^m\},g}$ is a \emph{\KP-algebra} if
  there exists $\eta\in\A$ such that
  \begin{align}\label{eq:KPalgDefRel}
    \sum_{i,j,k,l=1}^m\!\!\eta\{a,x^i\}g_{ij}\{x^j,x^k\}g_{kl}\{x^l,b\}=-\{a,b\}
  \end{align}
  for all $a,b\in\A$.
\end{definition}

\begin{remark}
  From now on, we shall use the differential geometric convention that repeated indices
  are summed over from $1$ to $m$, and omit explicit summation symbols.
\end{remark}

\noindent
Given a \KP-algebra $\K$, we let $\g$ denote the $\A$-module generated
by all inner derivations, i.e.
\begin{align*}
  \g = \{a_1\{c^1,\cdot\}+\cdots+a_N\{c^N,\cdot\}:a_i,c^i\in\A\text{
    and }N\in\naturals\}.
\end{align*}
It is a standard fact that $\g$ is a Lie algebra over $\Kfield$
with respect to
\begin{align*}
  [\alpha,\beta](a) = \alpha\paraa{\beta(a)}-\beta\paraa{\alpha(a)}.
\end{align*}
The matrix $g$ induces a bilinear symmetric form on $\g$, defined by
\begin{align}\label{eq:inducedMetricDef}
  g(\alpha,\beta)=\alpha(x^i)g_{ij}\beta(x^j),
\end{align}
and we refer to $g$ as the metric on $\g$. To the metric $g$ one may
associate the map $\gh:\g\to\g^\ast$ defined as
\begin{align*}
  \gh(\alpha)(\beta) = g(\alpha,\beta).
\end{align*}

\begin{proposition}\label{prop:kp.metric.nondegenerate}
  If $\K=\paraa{\A,\{x^1,\ldots,x^m\},g}$ is a \KP-algebra then the
  metric $g$ is non-degenerate; i.e. the map $\gh:\g\to\g^\ast$ is a
  module isomorphism.
\end{proposition}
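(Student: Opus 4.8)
The plan is to exploit the defining relation \eqref{eq:KPalgDefRel} to obtain an explicit formula expressing every element of $\g$ as an $\A$-linear combination of the basic inner derivations $\xi_i:=\{x^i,\cdot\}$, and to read off an inverse of $\gh$ from it. First I would fix $\alpha=\{a,\cdot\}$ and substitute $\{a,x^i\}=\alpha(x^i)$ together with $\{x^l,b\}=\xi_l(b)$ into \eqref{eq:KPalgDefRel}; since the relation holds for all $b$, both sides are a single derivation applied to $b$, so it yields the identity $\alpha=-\eta\,\alpha(x^i)g_{ij}\{x^j,x^k\}g_{kl}\xi_l$ in $\g$. By $\A$-linearity this passes from the generators $\{a,\cdot\}$ to an arbitrary $\alpha\in\g$. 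Using $\{x^j,x^k\}=-\xi_k(x^j)$ and the definition \eqref{eq:inducedMetricDef} of the metric, I would rewrite it in the reconstruction form
\begin{align*}
  \alpha=\eta\,g(\alpha,\xi_k)g_{kl}\xi_l\qquad\text{for all }\alpha\in\g,
\end{align*}
which in particular shows that $\g$ is generated over $\A$ by $\xi_1,\dots,\xi_m$.

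From this reconstruction identity the injectivity of $\gh$ is immediate: if $\gh(\alpha)=0$, then $g(\alpha,\xi_k)=0$ for every $k$, whence $\alpha=0$. For surjectivity I would introduce the candidate inverse $\Phi:\g^\ast\to\g$ defined by $\Phi(\phi)=\eta\,\phi(\xi_k)g_{kl}\xi_l$, which is a well-defined $\A$-linear map. Since $\gh(\alpha)(\xi_k)=g(\alpha,\xi_k)$, the reconstruction identity says precisely that $\Phi\circ\gh=\mathrm{id}_\g$.

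It then remains to verify $\gh\circ\Phi=\mathrm{id}_{\g^\ast}$, i.e. that $g(\Phi(\phi),\beta)=\phi(\beta)$ for every $\beta\in\g$. The left-hand side expands, by $\A$-bilinearity, to $\eta\,\phi(\xi_k)g_{kl}g(\xi_l,\beta)$, while applying the $\A$-linear functional $\phi$ to the reconstruction identity for $\beta$ gives $\phi(\beta)=\eta\,g(\beta,\xi_m)g_{mn}\phi(\xi_n)$. These two expressions agree after relabelling the summation indices, using the symmetry $g(\xi_l,\beta)=g(\beta,\xi_l)$ of the form together with $g_{kl}=g_{lk}$. This establishes that $\gh$ is a module isomorphism with inverse $\Phi$.

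The crux of the argument, and its only genuine difficulty, is the first step: recognising that \eqref{eq:KPalgDefRel} is nothing but the reconstruction identity in disguise, so that the \KP\ condition is exactly what forces the metric to be invertible. Once that identity is available the rest is bookkeeping, with the one point deserving care being the correct index manipulation and the essential use of the symmetry of $g_{ij}$ in the verification that $\gh\circ\Phi=\mathrm{id}$.
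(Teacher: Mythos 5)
Your proposal is correct and takes essentially the same route as the paper: your candidate inverse $\Phi(\phi)=\eta\,\phi(\xi_k)g_{kl}\xi_l$ is exactly the element $\alpha=\eta\,\omega(\{x^i,\cdot\})g_{ij}\{x^j,\cdot\}$ that the paper constructs for surjectivity, with \eqref{eq:KPalgDefRel} invoked in the same way. The only difference is organizational --- you isolate the reconstruction identity $\alpha=\eta\,g(\alpha,\xi_k)g_{kl}\xi_l$ once and deduce both injectivity and surjectivity from it, whereas the paper proves injectivity by the equivalent computation of testing $g(\alpha,\beta)=0$ against the specific elements $\beta=\eta\{c,x^k\}g_{km}\{\cdot,x^m\}$.
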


\begin{proof}
  Let us first show that $g$ is injective; i.e. we will show that
  $\gh(\alpha)(\beta)=0$, for all $\beta\in\g$, implies that
  $\alpha=0$. Thus, write $\alpha=\alpha_i\{x^i,\cdot\}$, and assume that
  $g(\alpha,\beta)=0$ for all $\beta\in\g$. In particular, we can
  choose $\beta=\eta \{c,x^k\}g_{km}\{\cdot,x^m\}$, for arbitrary
  $c\in\A$, which implies that
  \begin{align*}
    0 &= g(\alpha,\beta) = \eta \alpha_k\{x^k,x^i\}g_{ij}\{c,x^k\}g_{km}\{x^j,x^m\}\\
    &=-\alpha_k\eta\{x^k,x^i\}g_{ij}\{x^j,x^m\}g_{mk}\{x^k,c\}.
  \end{align*}
  Using the relation \eqref{eq:KPalgDefRel}, one obtains
  \begin{align*}
    \alpha_k\{x^k,c\} = 0
  \end{align*}
  for all $c\in\A$, which is equivalent to $\alpha=0$. This shows that
  $\gh$ is injective. Let us now show that $\gh$ is surjective.  Thus,
  let $\omega\in\g^\ast$ and set
  \begin{align*}
    \alpha = \eta\omega(\{x^i,\cdot\})g_{ij}\{x^j,\cdot\}\in\g,
  \end{align*}
  which gives 
  \begin{align*}
    \gh(\alpha)(a_k\{b^k,\cdot\}) &=
    \eta\omega(\{x^i,\cdot\})g_{ij}\{x^j,x^l\}g_{lm}a_k\{b^k,x^m\}\\
    &=-\eta a_k\{b^k,x^m\}g_{ml}\{x^l,x^j\}g_{ji}\omega(\{x^i,\cdot\}).
  \end{align*}
  Since $\omega$ is a module homomorphism one obtains
  \begin{align*}
    \gh(\alpha)(a_k\{b^k,\cdot\}) &=
    \omega(-\eta a_k\{b^k,x^m\}g_{ml}\{x^l,x^j\}g_{ji}\{x^i,\cdot\})\\
    &=\omega(a_k\{b^k,\cdot\}),
  \end{align*}
  by using \eqref{eq:KPalgDefRel}, which proves that every element of $\g^\ast$ is in the image
  of $\gh$. We conclude that $\gh$ is a module isomorphism.
\end{proof}

\begin{corollary}
  If $(\A,\{x^1,\ldots,x^m\},g)$ is a \KP\ algebra then $(\A,\g,g)$ is
  a metric Lie-Rinehart algebra.
\end{corollary}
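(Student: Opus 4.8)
The plan is to verify directly the two ingredients in the definition of a metric Lie-Rinehart algebra: first that $(\A,\g)$ is a Lie-Rinehart algebra, and second that $g$, as defined in \eqref{eq:inducedMetricDef}, is a metric on $\g$ in the sense of Definition~\ref{def:metric}. Most of the required structure has already been assembled in the discussion preceding the statement, so the work consists largely in collecting these pieces and checking closure and non-degeneracy.

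For the Lie-Rinehart structure, I would take the action $\omega:\g\to\Der(\A)$ to be simply the inclusion, since every element of $\g$ is by construction a derivation of $\A$. The module $\g$ is an $\A$-submodule of $\Der(\A)$ by definition, and the two axioms of Definition~\ref{def:Lie.Rinehart.algebra} hold automatically because they already hold in the Lie-Rinehart algebra $\paraa{\A,\Der(\A)}$ and restrict to any $\A$-submodule that is closed under the bracket. Hence the one substantive point is to show that $\g$ is closed under the commutator bracket. For two inner derivations this follows from the Jacobi identity of the Poisson bracket, which gives $\bracketa{\{c,\cdot\},\{d,\cdot\}}=\{\{c,d\},\cdot\}$, again an inner derivation. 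For general elements $\alpha=a_i\{c^i,\cdot\}$ and $\beta=b_j\{d^j,\cdot\}$ one expands the bracket using the Leibniz-type rule $[a\mu,b\nu]=ab[\mu,\nu]+a\mu(b)\nu-b\nu(a)\mu$ (valid in $\Der(\A)$); each resulting term is an $\A$-multiple of an inner derivation, so $[\alpha,\beta]\in\g$. This establishes that $\g$ is a Lie subalgebra, and therefore $(\A,\g)$ is a Lie-Rinehart algebra.

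For the metric, I would check the conditions of Definition~\ref{def:metric} for the bilinear form \eqref{eq:inducedMetricDef}. Symmetry is immediate from $g_{ij}=g_{ji}$ together with the commutativity of $\A$, and $\A$-bilinearity follows directly from $g(\alpha,\beta)=\alpha(x^i)g_{ij}\beta(x^j)$ since $(a\alpha)(x^i)=a\,\alpha(x^i)$. The non-degeneracy condition---that $\gh:\g\to\g^\ast$ is a module isomorphism---is exactly the content of Proposition~\ref{prop:kp.metric.nondegenerate}, which may be invoked verbatim. With symmetry, bilinearity and non-degeneracy in hand, $g$ is a metric on $\g$, and combining this with the Lie-Rinehart structure yields that $(\A,\g,g)$ is a metric Lie-Rinehart algebra.

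The main obstacle, such as it is, lies not in this corollary but in the two facts it rests upon: the closure of $\g$ under the bracket (which crucially uses the Jacobi identity, i.e. that $\A$ is a genuine Poisson algebra) and the non-degeneracy of $g$ (which is where the defining \KP\ relation \eqref{eq:KPalgDefRel} enters, through Proposition~\ref{prop:kp.metric.nondegenerate}). Once these are granted, the corollary reduces to matching definitions, and I would expect the writeup to be short.
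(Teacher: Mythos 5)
Your proposal is correct and follows exactly the paper's route: the paper's own proof simply notes that $(\A,\g)$ is ``easy to check'' to be a Lie-Rinehart algebra and then invokes Proposition~\ref{prop:kp.metric.nondegenerate} for non-degeneracy, which is precisely your structure. The only difference is that you spell out the elided details (closure of $\g$ under the bracket via the Jacobi identity and the Leibniz-type expansion, plus symmetry and $\A$-bilinearity of $g$), and those verifications are all accurate.
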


\begin{proof}
  It is easy to check that $(\A,\g)$ satisfies the conditions of a
  Lie-Rinehart algebra, and
  Proposition~\ref{prop:kp.metric.nondegenerate} implies that the
  metric is non-degenerate. Hence, $(\A,\g,g)$ is a metric
  Lie-Rinehart algebra.
\end{proof}

\noindent
Let us now introduce some notation for \KP\ algebras. Thus, we set
\begin{align*}
  \P^{ij} = \{x^i,x^j\}\\
  \P^{i}(a) = \{x^i,a\},
\end{align*}
for $a\in\A$, as well as
\begin{align*}
  &\D^{ij} = \eta{\P^{i}}_k{\P^{jk}}
  =\eta\{x^i,x^l\}g_{lk}\{x^j,x^k\}\\
  &\D^i(a) = \eta\P^k(a){\P_k}^i
  =\eta\{x^k,a\}g_{kl}\{x^l,x^i\},
\end{align*}
and note that $\D^{ij}=\D^{ji}$.
With respect to this notation,  \eqref{eq:KPalgDefRel} can be stated as
\begin{align}
  \D^i(a)\P_i(b) = \{a,b\}.
\end{align}
The metric will be used to lower indices in analogy with differential
geometry. E.g.
\begin{align*}
  {\P^{i}}_j = \P^{ik}g_{kj}\qquad {\D^i}_j = \D^{ik}g_{kj}.
\end{align*}
Furthermore, one immediately derives the following useful identities 
\begin{align}\label{eq:DPDDformulas}
  {\D^{ij}}\P_j(a) = \P^i(a),\quad
  \P^{ij}\D_j(a) = \P^i(a)\text{ and }
  {\D^i}_j\D^{jk} = \D^{ik}.
\end{align}
by using \eqref{eq:KPalgDefRel}.

There is a natural embedding $\iota:\g\to\A^m$, given by
\begin{align*}
  \iota(a_i\{b^i,\cdot\}) = a_i\{b^i,x^k\}e_k,
\end{align*}
where $\{e_k\}_{k=1}^m$ denotes the canonical basis of the free module
$\A^m$. Moreover, $g$ defines a bilinear form on $\A^m$ via
\begin{align*}
  g(X,Y) = X^ig_{ij}Y^j
\end{align*}
for $X=X^ie_i\in\A^m$ and $Y=Y^ie_i\in\A^m$, and we introduce the map
$\D:\A^m\to\A^m$ by setting
\begin{align*}
  \D(X)={\D^{i}}_jX^je_i
\end{align*}
for $X=X^ie_i\in\A^m$.

\begin{proposition}\label{prop:DorthProjection}
  The map $\D:\A^m\to\A^m$ is an orthogonal projection; i.e. 
  \begin{align*}
    &\D^2(X)=\D(X)\text{ and }
    g(\D(X),Y)=g(X,\D(Y))
  \end{align*}
  for all $X,Y\in\A^m$.
\end{proposition}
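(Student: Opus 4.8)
The plan is to verify the two asserted identities separately, reducing each to a statement about the components ${\D^i}_j$ and invoking the symmetries already recorded for these objects. For self-adjointness I would expand both sides directly, writing $g(\D(X),Y)={\D^i}_kX^kg_{ij}Y^j$ and $g(X,\D(Y))=X^ig_{ij}{\D^j}_kY^k$. Comparing the coefficients of $X^kY^j$ on the two sides, the claim reduces to the identity $\sum_i{\D^i}_kg_{ij}=\sum_p g_{kp}{\D^p}_j$. Lowering the upper index of $\D$ through ${\D^i}_j=\D^{il}g_{lj}$, both sides become $\D^{il}g_{ik}g_{lj}$ after a relabelling of summation indices, where the two expressions are matched using precisely $\D^{ij}=\D^{ji}$ (already noted right after the definition of $\D^{ij}$) together with $g_{ij}=g_{ji}$. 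Thus self-adjointness needs nothing beyond these two symmetries.

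For idempotency I would note that $\D^2(X)^i={\D^i}_k{\D^k}_jX^j$, so it suffices to prove ${\D^i}_k{\D^k}_j={\D^i}_j$. This is exactly the third identity in \eqref{eq:DPDDformulas}, namely ${\D^i}_j\D^{jk}=\D^{ik}$, with its free index $k$ lowered by $g_{kl}$: the left-hand side becomes ${\D^i}_j{\D^j}_l$ and the right-hand side becomes ${\D^i}_l$, giving the claim at once. Substituting back into the expression for $\D^2(X)^i$ then yields $\D^2(X)=\D(X)$.

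The main obstacle here is slight, since almost everything is routine index manipulation. The only non-trivial input is the identity ${\D^i}_j\D^{jk}=\D^{ik}$ used in the idempotency step, which in turn encodes the defining \KP\ relation \eqref{eq:KPalgDefRel}; I would therefore take care to invoke \eqref{eq:DPDDformulas} rather than re-deriving it. The one point demanding genuine attention is the self-adjointness computation, where one must keep track of which of the two symmetries, $\D^{ij}=\D^{ji}$ or $g_{ij}=g_{ji}$, is being applied at each relabelling, so that the coefficient of $X^kY^j$ is correctly identified on both sides.
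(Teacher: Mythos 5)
Your proposal is correct and follows essentially the same route as the paper: idempotency via the identity ${\D^i}_j\D^{jk}=\D^{ik}$ from \eqref{eq:DPDDformulas} with the free index lowered by $g$, and self-adjointness by expanding in components and relabelling using $\D^{ij}=\D^{ji}$ together with $g_{ij}=g_{ji}$. The index bookkeeping in both steps matches the paper's computation.
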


\begin{proof}
  First, it is clear that $\D$ is an endomorphism of $\A^m$.  It
  follows immediately from \eqref{eq:DPDDformulas} that
  \begin{align*}
    \D^2(X) = {\D^i}_j{\D^j}_kX^ke_i = 
    {\D^i}_j{\D^{jl}}g_{lk}X^ke_i = \D^{il}g_{lk}X^ke_i = {\D^{i}}_kX^ke_i = \D(X).
  \end{align*}
  Furthermore, using that $\D^{ij} = \D^{ji}$ one finds that
  \begin{align*}
    g\paraa{\D(X),Y} &= {\D^{i}}_jX^jg_{ik}Y^k
    =X^j\D^{il}g_{lj}g_{ik}Y^k =X^jg_{lj}\D^{li}g_{ik}Y^k \\
    &=X^jg_{jl}{\D^l}_kY^k = g\paraa{X,\D(Y)},
  \end{align*}
  which completes the proof.
\end{proof}

\noindent
From Proposition~\ref{prop:DorthProjection} we conclude that
\begin{align*}
  \TA = \im(\D)
\end{align*}
is a finitely generated projective module. As a corollary, we prove
that $\g$ is a finitely generated projective module by showing that
$\g$ is isomorphic to $\TA$.

\begin{proposition}
  The map $\iota:\g\to\A^m$ is an isomorphism from $\g$ to $\TA$.
\end{proposition}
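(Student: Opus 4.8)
The plan is to prove that $\iota$ is an injective $\A$-module homomorphism whose image is exactly $\TA=\im(\D)$. Since $\iota(\alpha)=\alpha(x^k)e_k$ depends only on the derivation $\alpha$ and not on any chosen presentation of it, and since $(a\alpha+\beta)(x^k)=a\alpha(x^k)+\beta(x^k)$, the map is manifestly well-defined and $\A$-linear; so the real content lies in injectivity together with the identification of the image with $\TA$.

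Before anything else I would record a reduction that is used repeatedly: every element of $\g$ can be written in the form $\alpha_i\{x^i,\cdot\}$ with $\alpha_i\in\A$. Indeed, rewriting the defining relation \eqref{eq:KPalgDefRel} as $\D^i(a)\P_i(b)=\{a,b\}$ and using $\P_i(a)=g_{ij}\{x^j,a\}$, one gets $\{c,\cdot\}=\paraa{\D^i(c)g_{ij}}\{x^j,\cdot\}$ as an identity of derivations; hence any inner derivation, and therefore any $\A$-combination of inner derivations, is an $\A$-combination of the distinguished derivations $\{x^j,\cdot\}$. Writing $\alpha=\alpha_i\{x^i,\cdot\}$ one then has $\iota(\alpha)^k=\alpha_i\P^{ik}$, which is the form I will feed into the index computations.

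For $\im(\iota)\subseteq\TA$ I would verify $\D\circ\iota=\iota$. Specializing ${\D^{ij}}\P_j(a)=\P^i(a)$ from \eqref{eq:DPDDformulas} to $a=x^n$ yields the identity ${\D^i}_k\P^{kn}=\P^{in}$, and combining it with the antisymmetry $\P^{ik}=-\P^{ki}$ gives ${\D^l}_k\P^{ik}=\P^{il}$; therefore $\D(\iota(\alpha))^l={\D^l}_k\alpha_i\P^{ik}=\alpha_i\P^{il}=\iota(\alpha)^l$, so $\iota(\alpha)\in\im(\D)=\TA$. Injectivity is equally short using the companion identity $\P^{ij}\D_j(a)=\P^i(a)$ from \eqref{eq:DPDDformulas}: if $\iota(\alpha)=0$ then $\alpha(x^j)=\alpha_i\P^{ij}=0$ for every $j$, whence $\alpha(a)=\alpha_i\P^i(a)=\paraa{\alpha_i\P^{ij}}\D_j(a)=0$ for all $a$, i.e. $\alpha=0$.

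The last and least automatic step is the reverse inclusion $\TA\subseteq\im(\iota)$, where I must produce an explicit preimage of an arbitrary $Y=\D(X)$. Expanding $Y^i={\D^i}_jX^j=\eta\{x^i,x^l\}g_{lm}\{x^k,x^m\}g_{kj}X^j$ and factoring off $\{x^i,x^l\}$ suggests setting $c_l=\eta g_{lm}\{x^k,x^m\}g_{kj}X^j$, so that $Y^i=\{x^i,x^l\}c_l$; then $\alpha:=-c_l\{x^l,\cdot\}\in\g$ satisfies $\iota(\alpha)^i=-c_l\{x^l,x^i\}=\{x^i,x^l\}c_l=Y^i$. The assignment $\sigma:X\mapsto\alpha$ is $\A$-linear and satisfies $\iota\circ\sigma=\D$, giving $\im(\D)\subseteq\im(\iota)$; together with the previous inclusion this yields $\im(\iota)=\TA$, and with injectivity we conclude that $\iota$ restricts to an isomorphism $\g\xrightarrow{\ \sim\ }\TA$. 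I expect the main obstacle to be bookkeeping rather than conceptual: keeping the raising and lowering of indices by $g$ consistent and invoking the antisymmetry of $\P^{ij}$ at exactly the right moments, since all the structural identities needed are already packaged in \eqref{eq:KPalgDefRel} and \eqref{eq:DPDDformulas}.
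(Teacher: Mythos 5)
Your proposal is correct and follows essentially the same route as the paper's proof: the inclusion $\im(\iota)\subseteq\TA$ via the identities \eqref{eq:DPDDformulas}, injectivity by writing $\alpha(a)$ in terms of the components $\alpha(x^i)$ using the \KP\ relation \eqref{eq:KPalgDefRel}, and surjectivity by an explicit preimage, where your element $-c_l\{x^l,\cdot\}$ is, after renaming indices, exactly the paper's choice $X^ig_{ij}\D^j(\cdot)$ (your factorization $\iota\circ\sigma=\D$ is a slightly cleaner packaging of the same computation). Your preliminary normalization $\alpha=\alpha_i\{x^i,\cdot\}$, derived correctly from \eqref{eq:KPalgDefRel}, is a harmless streamlining; the paper instead works with general elements $a_k\{b^k,\cdot\}$ and proves the closely related generation statement (that $\{\D^1,\ldots,\D^m\}$ generate $\g$) as a separate proposition immediately afterwards.
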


\begin{proof}
  First, it is clear from the definition that $\iota$ is a module
  homomorphism. Considered as a submodule of $\A^m$, elements of $\TA$
  can be characterized by the fact that $\D(X)=X$ for all
  $X\in\TA$. Thus, by showing that
  \begin{align*}
    \D\paraa{\iota(a_k\{b^k,\cdot\})} &= {\D^i}_ja_k\{b^k,x^j\}e_i
    =-a_k{\D^i}_j\P^j(b^k)a_ke_i\\
    &= -a_k\P^i(b^k)=\iota(a_k\{b^k,\cdot\})
  \end{align*}
  it follows that $\iota(a_k\{b^k,\cdot\})\in\TA$. Let us now show
  that $\iota$ is injective; assume that $\iota(a_k\{b^k,\cdot\})=0$,
  which implies that
  \begin{align*}
    a_k\{b^k,x^i\} = 0\quad\text{for }i=1,\ldots,m.
  \end{align*}
  Next, for arbitrary $c\in\A$, we write
  \begin{align*}
    a_k\{b^k,c\} = -\eta a_k\{b^k,x^i\}g_{ij}\P^{jl}g_{lm}\{x^m,c\},
  \end{align*}
  by using \eqref{eq:KPalgDefRel}. Since $a_k\{b^k,x^i\}=0$, one
  obtains $a_k\{b^k,c\}=0$ for all $c\in\A$.
  
  To prove that $\iota$ is surjective, we start from an arbitrary
  $X=X^ie_i\in\TA$, and note that
  \begin{align*}
    \iota\paraa{X^ig_{ij}\D^i(\cdot)} = X^ig_{ij}\D^{ik}e_k = \D(X) = X
  \end{align*}
  by using that $\D(X)=X$ for all $X\in\TA$. Hence, we may conclude
  that $\iota$ is an isomorphism from $\g$ to $\TA$.
\end{proof}

\begin{corollary}
  $\g$ is a finitely generated projective module.
\end{corollary}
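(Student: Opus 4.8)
The plan is to derive the statement formally from the two results immediately preceding it, so no genuinely new work is required. First I would invoke Proposition~\ref{prop:DorthProjection}, which gives $\D^2=\D$; thus $\D$ is an idempotent $\A$-module endomorphism of the free module $\A^m$. Standard module theory then yields the direct sum decomposition $\A^m=\im(\D)\oplus\ker(\D)$: every $X\in\A^m$ decomposes as $X=\D(X)+\paraa{X-\D(X)}$ with $\D(X)\in\im(\D)$ and $X-\D(X)\in\ker(\D)$ (using $\D^2=\D$ to see that $\D\paraa{X-\D(X)}=\D(X)-\D^2(X)=0$), while the intersection is trivial since any $X$ lying in both summands satisfies $X=\D(X)=0$.

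Since $\TA=\im(\D)$ is therefore a direct summand of the free module $\A^m$, it is both finitely generated and projective, these being the standard consequences of being a direct summand of a finitely generated free module. I would then appeal to the preceding proposition, which provides an $\A$-module isomorphism $\iota:\g\to\TA$. As both finite generation and projectivity are preserved under module isomorphism, it follows at once that $\g$ is a finitely generated projective module.

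There is essentially no obstacle here, as the corollary is a purely formal consequence of the established isomorphism $\g\cong\TA$ together with $\TA=\im(\D)$. The only points deserving any care are the (routine) verification that the displayed decomposition is one of $\A$-modules rather than merely of $\Kfield$-vector spaces, and the observation that ``direct summand of a free module'' is precisely the characterization of projectivity being invoked.
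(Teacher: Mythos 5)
Your proof is correct and takes essentially the same route as the paper, which deduces the corollary from exactly the two facts you cite: the idempotency of $\D$ from Proposition~\ref{prop:DorthProjection} exhibits $\TA=\im(\D)$ as a direct summand of the free module $\A^m$, hence finitely generated projective, and the isomorphism $\iota:\g\to\TA$ transfers this to $\g$. Your spelled-out decomposition $\A^m=\im(\D)\oplus\ker(\D)$ is just the standard detail the paper leaves implicit; nothing further is needed.
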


\noindent
Note that the above result is clearly not dependent on whether or not
the underlying Poisson algebra has the structure of a \KP\ algebra, as
the definition of $\g$ involves only inner derivations. Hence, as soon
as the Poisson algebra admits the structure of a \KP\ algebra, it
follows that the module of inner derivations is
projective. Furthermore, the fact that $\g$ is a projective module has
several implications for the underlying Lie-Rinehart algebra
\cite{r:differential.forms,h:poisson.cohomology}. Next, let us show
that the derivations $\D^i$ generate $\g$ as an $\A$-module.

\begin{proposition}
  The $\A$-module $\g$ is generated by $\{\D^1,\ldots,\D^m\}$.
\end{proposition}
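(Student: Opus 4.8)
The plan is to prove the statement by establishing both inclusions between $\g$ and the $\A$-submodule $M$ generated by $\D^1,\ldots,\D^m$. For $M\subseteq\g$ I would simply read off, from the definition $\D^i(a)=\eta\{x^k,a\}g_{kl}\{x^l,x^i\}$, that as a derivation $\D^i=\eta g_{kl}\{x^l,x^i\}\{x^k,\cdot\}$ is a finite $\A$-linear combination of inner derivations; hence each $\D^i$ lies in $\g$, and therefore $M\subseteq\g$.

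For the reverse inclusion $\g\subseteq M$, recall that $\g$ is generated as an $\A$-module by the inner derivations $\{c,\cdot\}$, $c\in\A$, so it is enough to exhibit each $\{c,\cdot\}$ as an $\A$-linear combination of the $\D^i$. The key tool is the compact form of the defining relation \eqref{eq:KPalgDefRel}, namely $\D^i(a)\P_i(b)=\{a,b\}$. I would apply this identity with the fixed element $c$ placed in the second slot and the running argument $b$ in the first, giving $\D^i(b)\P_i(c)=\{b,c\}$ for every $b\in\A$. Invoking antisymmetry of the Poisson bracket together with commutativity of $\A$ then yields $\{c,b\}=-\{b,c\}=-\P_i(c)\,\D^i(b)$, where the coefficients $\P_i(c)=g_{ij}\{x^j,c\}\in\A$ are independent of $b$.

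Since the last identity holds for all $b\in\A$, it is an equality of derivations, $\{c,\cdot\}=-\P_i(c)\,\D^i$. Consequently every generator of $\g$, and hence by $\A$-linearity every element of $\g$, belongs to $M$; combined with the first inclusion this gives $\g=M$, i.e. $\g$ is generated by $\{\D^1,\ldots,\D^m\}$.

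I do not anticipate a genuine obstacle in this argument; the only delicate point is the bookkeeping of raised and lowered indices together with the sign coming from antisymmetry, which must be arranged so that the $\A$-valued factors $\P_i(c)$ end up as coefficients while the running argument $b$ stays inside $\D^i$.
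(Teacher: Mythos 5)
Your proposal is correct and takes essentially the same route as the paper: both inclusions are handled identically, and your use of the compact identity $\D^i(a)\P_i(b)=\{a,b\}$ to obtain $\{c,\cdot\}=-\P_i(c)\,\D^i$ is precisely the paper's application of the defining relation \eqref{eq:KPalgDefRel}, merely stated in index-free form rather than written out with explicit $\eta$, $g_{ij}$ and brackets. The sign and index bookkeeping in your argument check out against the paper's computation.
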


\begin{proof}
  First of all, it is clear that every element in the module generated
  by $\D^i$, written as
  \begin{align*}
    \alpha(c) = \alpha_i\D^i(c)
    = \eta\alpha_i\{x^i,x^j\}g_{jk}\{c,x^k\},
  \end{align*}
  is an element of $\g$. Conversely, let $\alpha\in\g$ be an arbitrary
  element written as
  \begin{align*}
    \alpha(c) = \sum_{N}a_N\{b^N,c\}.
  \end{align*}
  for $c\in\A$. Using the \KP\ condition \eqref{eq:KPalgDefRel} one
  may write this as
  \begin{align*}
    \alpha(a) &= \sum_N a_N\{b^N,c\}
    =-\sum_N\eta a_N\{b^N,x^i\}g_{ij}\{x^j,x^k\}g_{kl}\{x^l,c\}\\
    &=\parab{\sum_N a_N\{b^N,x^i\}g_{ij}}\D^j(c),
  \end{align*}
  which clearly lies in the module generated by $\{\D^1,\ldots,\D^m\}$.
\end{proof}

\noindent
Thus, every $\alpha\in\g$ may be written as $\alpha=\alpha_i\D^i$ for
some $\alpha_i\in\A$. It turns out that this is a very convenient way
of writing elements of $\g$, which shall be extensively used in the
following.  Note that if the \KP\ algebra comes from an almost
K\"ahler manifold $M$, then $\D^i$ is quite close to a partial
derivative on $M$ in the sense that $(\d_ax^i)g_{ik}\D^k(f)=\d_af$,
for $f\in C^\infty(M)$.

\subsection{The trace of linear maps}\label{sec:trace}

As we shall be interested in both Ricci and scalar curvature, which
are defined using traces of linear maps, we introduce
\begin{align}\label{eq:def.trace}
  \tr(L) = g\paraa{L(\D^i),\D^j}\D_{ij}.
\end{align}
for an $\A$-linear map $L:\g\to\g$.  This trace coincides with the
ordinary trace on $\g^\ast\otimes_{\A}\g$; namely, consider
\begin{align*}
  L = \sum_N\omega_N\otimes_{\A}\alpha^N\in\g^\ast\otimes_{\A}\g
\end{align*}
as a linear map $L:\g\to\g$ in the standard way via
\begin{align*}
  L(\beta) = \sum_N\omega_N(\beta)\alpha^N,
\end{align*}
together with
\begin{align*}
  \tr(L) = \sum_N\omega_N(\alpha^N).
\end{align*}
Writing $\alpha^N=\alpha^N_i\D^i$ one finds that
\begin{align*}
  g\paraa{L(\D^i),\D^j}\D_ {ij} 
  &= \sum_Ng\paraa{\omega_N(\D^i)\alpha^N_k\D^k,\D^j}\D_{ij}
    = \sum_N\omega_N(\D^i)\alpha^N_k\D^{kj}\D_{ij}\\
  &= \sum_N\omega_N(\alpha_k^N{\D^k}_i\D^i)
    =\sum_N\omega_N(\alpha_k^N\D^k)=\sum_N\omega_N(\alpha^N).
\end{align*}
In particular, this implies that the trace defined via
\eqref{eq:def.trace} is independent of the \KP\ structure.

\subsection{Morphisms of \KP\ algebras}

\noindent
As \KP\ algebras are also metric Lie-Rinehart algebras, we shall
require that a morphism of \KP\ algebras is also a morphism of metric
Lie-Rinehart algebras (as defined in
Section~\ref{sec:metric.Lie.Rinehart}). However, as the definition of
a \KP\ also involves the choice of a set of distinguished elements, we
will require a morphism to respect the subalgebra generated by these
elements. To this end, we start by making the following definition.

\begin{definition}
  Given a \KP\ algebra $(\A,\{x^1,\ldots,x^m\},g)$, let
  $\Afin\subseteq \A$ denote the subalgebra generated by
  $\{x^1,\ldots,x^m\}$.
\end{definition}

\noindent
Equipped with this definition, we introduce morphisms of \KP\ algebras in the
following way.

\begin{definition}
  Let $\K=(\A,\{x^1,\ldots,x^m\},g)$ and
  $\K'=(\A',\{y^1,\ldots,y^{m'}\},g')$ be \KP\ algebras together with
  their corresponding modules of derivations $\g$ and $\g'$,
  respectively. A \emph{morphism of \KP\ algebras} is a pair of maps
  $(\phi,\psi)$, with $\phi:\A\to\A'$ and $\psi:\g\to\g'$, such that
  $(\phi,\psi)$ is a morphism of the metric Lie-Rinehart algebras
  $(\A,\g,g)$ and $(\A,\g',g')$ and $\phi$ is a Poisson algebra
  homomorphism such that $\phi(\Afin)\subseteq \Afin'$.
\end{definition}

\noindent
Note that if the algebras are finitely generated such that $\A=\Afin$
and $\A'=\Afin'$ (which is the case in many examples), the condition
$\phi(\Afin)\subseteq\Afin'$ is automatically satisfied. Although a
morphism of \KP\ algebras is given by a choice of two maps $\phi$ and
$\psi$, it is often the case that $\phi$ determines $\psi$ in the
following sense.

\begin{proposition}\label{prop:KP.morphism.psi}
  Let
  $(\phi,\psi):(\A,\{x^1,\ldots,x^m\},g)\to(\A',\{y^1,\ldots,y^{m'}\},g')$
  be a morphism of \KP\ algebras such that for all $\alpha'\in\g'$
  \begin{align*}
    \alpha'\paraa{\phi(a)} = 0\quad\forall\,a\in\A\implies
    \alpha' = 0
  \end{align*}
  then
  \begin{align*}
    \psi\paraa{a\{b,\cdot\}_{\A}} = \phi(a)\{\phi(b),\cdot\}_{\A'}. 
  \end{align*}
\end{proposition}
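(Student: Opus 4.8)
The plan is to reduce the statement to the single-derivation identity $\psi\paraa{\{b,\cdot\}_{\A}}=\{\phi(b),\cdot\}_{\A'}$ and then invoke the $\A$-linearity built into the definition of a Lie-Rinehart morphism. Indeed, since $(\phi,\psi)$ is in particular a morphism of metric Lie-Rinehart algebras, $\psi$ satisfies $\psi(a\alpha)=\phi(a)\psi(\alpha)$; hence once the single-derivation identity is known, one immediately obtains
\[
  \psi\paraa{a\{b,\cdot\}_{\A}}=\phi(a)\psi\paraa{\{b,\cdot\}_{\A}}=\phi(a)\{\phi(b),\cdot\}_{\A'},
\]
which is the desired conclusion. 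So the whole content lies in the single-derivation case.

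To establish $\psi\paraa{\{b,\cdot\}_{\A}}=\{\phi(b),\cdot\}_{\A'}$, I would introduce the auxiliary element $\alpha'=\psi\paraa{\{b,\cdot\}_{\A}}-\{\phi(b),\cdot\}_{\A'}$, which lies in $\g'$ because $\{\phi(b),\cdot\}_{\A'}$ is an inner derivation of $\A'$ and $\g'$ contains all inner derivations. Evaluating on $\phi(a)$ for arbitrary $a\in\A$, the action-compatibility condition $\phi\paraa{\alpha(a)}=\psi(\alpha)\paraa{\phi(a)}$ applied to $\alpha=\{b,\cdot\}_{\A}$, together with the fact that $\phi$ is a Poisson homomorphism, gives
\[
  \psi\paraa{\{b,\cdot\}_{\A}}\paraa{\phi(a)}=\phi\paraa{\{b,a\}_{\A}}=\{\phi(b),\phi(a)\}_{\A'}=\{\phi(b),\cdot\}_{\A'}\paraa{\phi(a)}.
\]
Hence $\alpha'\paraa{\phi(a)}=0$ for every $a\in\A$, and the nondegeneracy hypothesis of the proposition forces $\alpha'=0$, which is exactly the single-derivation identity.

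The proof is short, and the only genuinely essential ingredient is the hypothesis of the proposition: it asserts precisely that an element of $\g'$ is determined by its restriction to $\phi(\A)$, which is what lets us conclude $\alpha'=0$ from the vanishing of $\alpha'\paraa{\phi(a)}$. Without such an assumption, $\psi$ could differ from the natural candidate by a derivation annihilating $\phi(\A)$, so this is where the real hypothesis does its work; everything else is a direct bookkeeping of the three defining properties of a \KP\ morphism (the $\A$-linearity of $\psi$, the intertwining of the two actions, and the preservation of the Poisson bracket by $\phi$). The one small point to watch is to confirm at the outset that $\{\phi(b),\cdot\}_{\A'}\in\g'$, so that $\alpha'$ is a legitimate element of $\g'$ on which the hypothesis may be invoked.
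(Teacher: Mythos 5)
Your proof is correct and follows essentially the same route as the paper's: both use the Poisson-homomorphism property of $\phi$ together with the intertwining condition $\phi\paraa{\alpha(a)}=\psi(\alpha)\paraa{\phi(a)}$ to show $\psi(\alpha)$ agrees with the natural candidate on $\phi(\A)$, then invoke the hypothesis to conclude equality. Your preliminary reduction to the case $a=1$ via $\psi(a\alpha)=\phi(a)\psi(\alpha)$ is a harmless repackaging of what the paper does in one step.
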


\begin{proof}
  Let
  \begin{align*}
    (\phi,\psi):(\A,\{x^1,\ldots,x^m\},g)\to(\A',\{y^1,\ldots,y^{m'}\},g')
  \end{align*}
  be a morphism of \KP\ algebras fulfilling the assumption
  above. Since $\phi$ is a Poisson algebra homomorphism, one obtains
  for $\alpha=a\{b,\cdot\}_{\A}$
  \begin{align*}
    \phi\paraa{\alpha(c)} = \phi\paraa{a\{b,c\}_{\A}}
    =\phi(a)\{\phi(b),\phi(c)\}_{\A'}
  \end{align*}
  for all $a\in\A$. By the definition of a Lie-Rinehart morphism, this
  has to equal $\psi(\alpha)(\phi(c))$; i.e.
  \begin{align*}
    \psi(\alpha)(\phi(c)) = \phi(a)\{\phi(b),\phi(c)\}_{\A'}.
  \end{align*}
  Thus, $\psi(\alpha)$ agrees with
  $\phi(a)\{\phi(b),\cdot\}_{\A'}$ on the image of $\phi$, which
  implies that
  \begin{align*}
    \psi(\alpha) = \phi(a)\{\phi(b),\cdot\}_{\A'}
  \end{align*}
  since any derivation is determined by its action on the image of
  $\phi$ by assumption.
\end{proof}

\noindent
For instance, the requirements in
Proposition~\ref{prop:KP.morphism.psi} are clearly satisfied if $\phi$
is surjective.

\subsection{Construction of \KP\ algebras}\label{sec:kp.general.construction}

\noindent
Given a Poisson algebra $(\A,\{\cdot,\cdot\})$ one may ask if there
exist $\{x^1,\ldots,x^m\}$ and $g_{ij}$ such that
$(\A,\{x^1,\ldots,x^m\},g)$ is a \KP\ algebra? Let us consider the
case when $\A$ is a finitely generated algebra, and let
$\{x^1,\ldots,x^m\}$ be an arbitrary set of generators. If we denote
by $\P$ the matrix with entries $\{x^i,x^j\}$ and by $g$ the matrix
with entries $g_{ij}$, the \KP\ condition \eqref{eq:KPalgDefRel} may
be written in matrix notation as
\begin{align*}
  \eta\P g\P g\P = -\P.
\end{align*}
Given an arbitrary antisymmetric matrix $\P$, we shall find $g$ by
first writing $\P$ in a block diagonal form, with antisymmetric $2\times 2$
matrices on the diagonal. This is a well known result in linear
algebra, in which case the eigenvalues appear in the diagonal blocks. For an
antisymmetric matrix with entries in a commutative ring, a similar
result holds. 
\begin{lemma}\label{lemma:VTPV}
  Let $M_N(R)$ denote the set of $N\times N$ matrices with entries in
  $R$.  For $N\geq 2$, let $\P\in M_N(R)$ be an antisymmetric
  matric. Then there exists $V\in M_N(R)$, an antisymmetric $Q\in M_{N-2}(R)$ and
  $\lambda\in R$ such that
  \begin{center}
    $V^T\P V=\left(
      \begin{array}{c|c}
        \!\!\!\begin{array}{cc}
                0 & \lambda\\
                -\lambda & 0
              \end{array}& 0\\ \hline
        0 & Q
      \end{array}\right)$.
  \end{center}
\end{lemma}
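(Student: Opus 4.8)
The plan is to prove this by a single reduction step: extract one antisymmetric $2\times 2$ block and leave behind a smaller antisymmetric matrix $Q$, which is exactly the statement as written (note that the lemma is the single-step version, not the full block-diagonalization — it only peels off one block). The key idea is to exhibit an explicit invertible change of basis $V$ that moves the ``interesting'' part of $\P$ into the top-left $2\times 2$ corner and kills the coupling between the first two coordinates and the rest.

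First I would handle the degenerate case where the entire first row (equivalently, first column) of $\P$ vanishes, or more generally where all entries $\P^{1j}$ are such that no pivot can be formed; in the simplest sub-case one can take $\lambda=0$ and $V$ a permutation-type matrix that isolates a zero block. The generic case is where some entry, say $\P^{12}=\lambda$, is available as a pivot. Then I would define $V$ to act as the identity on the first two basis vectors $e_1,e_2$, and for each $k\geq 3$ replace $e_k$ by $e_k + c_k e_1 + d_k e_2$, choosing the coefficients $c_k,d_k\in R$ so that the mixed entries $(V^T\P V)^{1k}$ and $(V^T\P V)^{2k}$ vanish for all $k\geq 3$. Writing out $(V^T \P V)^{ab} = \sum_{p,q} V^p{}_a \P^{pq} V^q{}_b$, the vanishing conditions become a linear system in $c_k,d_k$ whose coefficient matrix is precisely the $2\times 2$ block $\left(\begin{smallmatrix} 0 & \lambda \\ -\lambda & 0\end{smallmatrix}\right)$; since this block has ``determinant'' $\lambda^2$, one can solve explicitly provided $\lambda$ is invertible in $R$. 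The resulting $Q$ is then the lower-right $(N-2)\times(N-2)$ block of $V^T\P V$, and antisymmetry of $Q$ follows automatically from antisymmetry of $V^T\P V$ (which is inherited from $\P$).

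The main obstacle is the hypothesis ``entries in a commutative ring'': unlike the field case, the pivot $\lambda$ need \emph{not} be invertible, so I cannot freely divide to solve for $c_k,d_k$. This is the crux, and I expect the intended resolution is one of two routes. The honest route is to observe that in the application $R=\A$ is the \KP\ algebra, where the relevant Poisson-bracket entries behave well enough (the \KP\ relation \eqref{eq:KPalgDefRel} forces strong rank/invertibility constraints on $\P$ via $\D$), so one may assume the pivot is invertible or pass to a localization; the phrase ``constructed as a localization'' in the abstract strongly suggests that localizing at $\lambda$ (or at a suitable product) is exactly the mechanism that makes the pivot invertible. The cleaner route, if the statement is meant to hold over a general commutative ring with no invertibility, is to prove it only formally and absorb any denominators into $V$ by scaling — that is, allow $V^T\P V$ to have a block $\left(\begin{smallmatrix} 0 & \lambda \\ -\lambda & 0\end{smallmatrix}\right)$ where $\lambda$ is a power or multiple of the original pivot, and clear denominators throughout so that all of $V,Q,\lambda$ stay in $R$.

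Finally, I would assemble the pieces: verify by direct substitution that the chosen $V$ produces the claimed block form, confirm $V\in M_N(R)$ (all entries lie in $R$ after clearing any denominators), and read off $Q\in M_{N-2}(R)$ and $\lambda\in R$ from the computed product. Since only a single block is peeled off, no induction is needed for this lemma itself — the iteration to full block-diagonal form would be a separate argument used later in Section~\ref{sec:kp.general.construction}. I would keep the verification at the level of indicating the linear system and its solution rather than grinding through every entry of $V^T\P V$.
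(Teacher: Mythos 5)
Your second route is, in substance, the paper's own proof, and you should commit to it rather than hedging. The paper clears the $(k1)$ and $(k2)$ entries for each $k\geq 3$ by three elementary operations: first multiply row (and the corresponding column) $k$ by the pivot $p_{12}$, then add $-p_{k2}$ times row $1$ and $p_{k1}$ times row $2$ to row $k$. This is exactly your linear system with denominators cleared: the new $k$-th basis vector is $p_{12}e_k - p_{k2}e_1 + p_{k1}e_2$, and since rows $1$ and $2$ are never modified one even keeps $\lambda = p_{12}$ on the nose (not a power or multiple of it). Two of your hedges should be dropped. First, the localization route does not prove the lemma as stated: the statement is over an arbitrary commutative ring $R$ and demands $V\in M_N(R)$ and $\lambda\in R$, so you may not invert the pivot; localization enters only later, in Section~\ref{sec:kp.general.construction}, where the adjugate identity $\Vt V = (\det V)\mid$ and the assumption that $\det V$ is not a zero divisor are used to pass from $V^T\paraa{\P g\P g\P+\lambda^2\P}V=0$ to the \KP\ relation \eqref{eq:KPalgDefRel}. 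Second, your degenerate-case discussion is unnecessary: the lemma requires neither $V$ invertible nor $\lambda\neq 0$, and the scaled operations above yield the stated block form uniformly, even when $p_{12}=0$ (one simply gets $\lambda=0$), so the paper needs no case distinction and no permutation step. With the denominator-clearing route carried out as you indicate, your argument is correct and coincides with the paper's.
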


\begin{proof}
  We shall construct the matrix $V$ by using elementary row and column
  operations. Note that if a matrix $E$ represents an elementary row
  operation, then $E^T\P E$ is obtained by applying the elementary
  operation to both the row and the corresponding column. Denoting the
  matrix elements of $\P$ by $p_{ij}$, we start by constructing a
  matrix $V_k$ such that $(V_k^T\P V_k)_{k1}=(V_k^T\P V_k)_{k2}=0$
  (which necessarily implies that also the $(1k)$ and $(2k)$ matrix
  elements are zero). To this end, let $V_k^1$ denote the matrix
  representing the elementary row operation that multiplies the $k$'th
  row by $p_{12}$, and let $V_k^2$ represent the operation that adds
  the first row, multiplied by $-p_{k2}$, to the $k$'th
  row. Furthermore, $V_k^3$ represents the operation of adding the
  second row, multiplied by $p_{k1}$, to the $k$'th row. Setting
  $V_k=V_k^1V_k^2V_k^3$ it is easy to see that $V_k^T\P V_k$ is an
  antisymmetric matrix where the $(1k)$, $(2k)$, $(k1)$ and $(k2)$
  matrix elements are zero. Consequently, we set $V=V_3V_4\cdots V_N$
  and conclude that $V^T\P V$ is of the desired form.
\end{proof}

\begin{proposition}\label{prop:VTPV}
  Let $\P\in M_N(R)$ be an antisymmetric matric, and let $\hat{N}$ denote
  the integer part of $N/2$. Then there exists $V\in M_N(R)$ and
  $\lambda_1,\ldots,\lambda_{\hat{N}}\in R$ such that
  \begin{align*}
    &V^T\P V = \diag(\Lambda_1,\ldots,\Lambda_{\hat{N}})\quad\text{if $N$ is even,}\\
    &V^T\P V = \diag(\Lambda_1,\ldots,\Lambda_{\hat{N}},0)\quad\text{if $N$ is odd,}
  \end{align*}
  where
  \begin{align*}
    \Lambda_k =
    \begin{pmatrix}
      0 &\lambda_k \\
      -\lambda_k & 0
    \end{pmatrix}.
  \end{align*}
\end{proposition}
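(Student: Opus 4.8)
The plan is to prove Proposition~\ref{prop:VTPV} by induction on $\Nh$ (equivalently on $N$), using Lemma~\ref{lemma:VTPV} as the inductive step. The lemma already provides, for any antisymmetric $\P\in M_N(R)$, a matrix $V\in M_N(R)$ and a $\lambda\in R$ so that $V^T\P V$ is block-diagonal with a single $2\times 2$ antisymmetric block $\Lambda=\left(\begin{smallmatrix}0&\lambda\\-\lambda&0\end{smallmatrix}\right)$ in the top-left corner and an antisymmetric remainder $Q\in M_{N-2}(R)$ in the lower-right corner. Since $Q$ is again antisymmetric with entries in $R$ but of size $N-2$, the inductive hypothesis applies to it, and composing the change-of-basis matrices yields the full block-diagonal form.

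Concretely, I would first dispose of the base cases. For $N=2$ the matrix $\P$ is already of the form $\Lambda_1$, so $V=\mid$ works. For $N=1$ (or more generally the odd leftover), an antisymmetric $1\times 1$ matrix is the zero matrix, giving the trailing $0$ block in the odd case; so $N=1$ is handled with $V=\mid$ and $\lambda$-list empty. Then for the inductive step with $N\geq 3$, apply Lemma~\ref{lemma:VTPV} to obtain $V_1\in M_N(R)$, $\lambda_1\in R$, and antisymmetric $Q\in M_{N-2}(R)$ with
\begin{align*}
  V_1^T\P V_1 = \begin{pmatrix} \Lambda_1 & 0 \\ 0 & Q \end{pmatrix}.
\end{align*}
By the inductive hypothesis there is $W\in M_{N-2}(R)$ and $\lambda_2,\ldots,\lambda_{\Nh}\in R$ bringing $Q$ into the asserted diagonal form (with a trailing $0$ when $N-2$, and hence $N$, is odd). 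Setting
\begin{align*}
  V = V_1\begin{pmatrix} \mid_2 & 0 \\ 0 & W \end{pmatrix},
\end{align*}
a direct block computation gives $V^T\P V=\diag(\Lambda_1,\ldots,\Lambda_{\Nh})$ in the even case and $\diag(\Lambda_1,\ldots,\Lambda_{\Nh},0)$ in the odd case, since conjugating the block-diagonal matrix $\diag(\Lambda_1,Q)$ by $\diag(\mid_2,W)$ leaves $\Lambda_1$ untouched and transforms $Q$ by $W^T Q W$.

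The one genuine point to verify, rather than an obstacle, is the bookkeeping of $\Nh$ across the two parity cases: when $N$ is even, $N-2$ is even and $\widehat{N-2}=\Nh-1$, producing blocks $\Lambda_2,\ldots,\Lambda_{\Nh}$ from $Q$; when $N$ is odd, $N-2$ is odd and $\widehat{N-2}=\Nh-1$ again, so $Q$ contributes $\Lambda_2,\ldots,\Lambda_{\Nh}$ together with its own trailing zero, which becomes the trailing zero of $V^T\P V$. Either way the total count of $2\times 2$ blocks is $\Nh$, matching the statement. I do not anticipate a real difficulty here: everything reduces to Lemma~\ref{lemma:VTPV} plus block matrix multiplication, and the only care needed is that all matrices have entries in the commutative ring $R$ (so no division is used beyond what the lemma already permits) and that the index arithmetic on the $\lambda_k$ is tracked correctly through the induction.
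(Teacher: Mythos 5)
Your proposal is correct and takes essentially the same approach as the paper: repeated application of Lemma~\ref{lemma:VTPV}, composing the change-of-basis matrices via $\diag(\mid_2,W)$, with the only cosmetic difference being that you run a structural induction on $N$ (peel off one block and recurse) whereas the paper iterates on the number $k$ of extracted blocks until $N-2k<2$. Your parity bookkeeping and the observation that the odd $1\times 1$ leftover vanishes by antisymmetry correspond exactly to the paper's remark about the $(NN)$ entry.
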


\begin{proof}
  Let us prove the statement by using induction together with
  Lemma~\ref{lemma:VTPV}. Thus, assume that there exists
  $V\in M_N(R)$ such that
  \begin{align*}
    V^T\P V = \diag(\Lambda_1,\ldots,\Lambda_k,Q_{k+1})
  \end{align*}
  where $Q_{k+1}\in M_{N-2k}$ is an antisymmetric matrix. Clearly, by
  Lemma~\ref{lemma:VTPV}, this holds true for $k=1$. Next, assume that
  $N-2k\geq 2$. Applying Lemma~\ref{lemma:VTPV} to $Q_{k+1}$ we
  conclude that there exists $V_{k+1}\in M_{N-2k}(R)$ such that
  $V_{k+1}^TQ_{k+1}
  V_{k+1}=\diag(\Lambda_{k+1},Q_{k+2})$. Furthermore, defining
  $W_{k+1}\in M_{N}(R)$ by $W_{k+1}=\diag(\mid_{2k},V_{k+1})$ one finds
  that
  \begin{align*}
    (VW_{k+1})^T\P (VW_{k+1})=\diag(\Lambda_1,\ldots,\Lambda_{k+1},Q_{k+2}).
  \end{align*}
  By induction, it follows that one may repeat this procedure until
  $N-2k<2$. If $N$ is even, then $N-2k=0$ and the statement
  follows. If $N$ is odd, then $N-2k=1$ and, since $V^T\P V$ is
  antisymmetric, it follows that the $(NN)$ matrix element is zero,
  giving the stated result.
\end{proof}

\noindent
Returning to the case of a Poisson algebra generated by
$x^1,\ldots,x^m$, assume for the moment that $m=2N$ for a positive
integer $N$. By Proposition~\ref{prop:VTPV}, there exists a matrix $V$
\begin{align*}
  V^T\P V = \P_0
\end{align*}
where $\P_0$ is a block diagonal matrix of the form
\begin{align*}
  \P_0 = \diag(\Lambda_1,\ldots,\Lambda_N)
\end{align*}
with
\begin{align*}
  \Lambda_k=
  \begin{pmatrix}
    0 & \lambda_k\\
    -\lambda_k & 0
  \end{pmatrix}.
\end{align*}
In the same way, defining $g_0 = \diag(g_1,\ldots,g_N)$ with
\begin{align*}
  &g_k = \frac{\lambda}{\lambda_k}
  \begin{pmatrix}
    1 & 0\\
    0 & 1
  \end{pmatrix}\\
  &\lambda=\lambda_1\cdots\lambda_N
\end{align*}
we set $g = Vg_0V^T$. Noting that
\begin{align*}
  \P_0g_0\P_0g_0\P_0=-\lambda^2\P_0
\end{align*}
one finds
\begin{align*}
  0 &= \P_0g_0\P_0g_0\P_0+\lambda^2\P_0
  =V^T\P V g_0 V^T\P_0 VgV^T\P_0 V+\lambda^2V^T\P V\\
  &= V^T\paraa{\P g\P g\P+\lambda^2\P}V
\end{align*}
It is a general fact that for an arbitrary matrix $V$ there exists a
matrix $\Vt$ such that $\Vt V=V\Vt = (\det V)\mid$. Multiplying the
above equation from the left by $\Vt^T$ and from the right by $\Vt$
yields
\begin{align}
  \det(V)^2\paraa{\P g\P g\P+\lambda^2\P} = 0.
\end{align}
As long as $\det(V)$ is not a zero divisor, this implies that
\begin{align*}
  \P g\P g\P=-\lambda^2\P.
\end{align*}
Thus, given a finitely generated Poisson algebra $\A$, the above
procedure gives a rather general way to associate a localization
$\A[\lambda^{-1}]$ and a metric $g$ to $\A$, such that
$(\A[\lambda^{-1}],\{x^1,\ldots,x^m\},g)$ is a \KP\ algebra.  Note
that the above argument, with only slight notational changes, also
applies to the case when $m$ is odd, in which case an extra block of
$0$ will appear in $\P_0$.

\section{The Levi-Civita connection}\label{sec:levi.civita}

\noindent
Since every \KP\ algebra is also a metric Lie-Rinehart algebra, the
results of Section \ref{sec:metric.Lie.Rinehart} immediately
applies. In particular, there exists a unique torsion-free and metric
connection on the module $\g$. In this section, we shall derive an
explicit expression for the Levi-Civita connection of an arbitrary
\KP\ algebra. It turns out to be convenient to formulate the results
in terms of the generators $\{\D^1,\ldots,\D^m\}$.  Kozul's formula
gives the connection as
\begin{equation}
  \begin{split}
    2g&(\nabla_{\D^i}\D^j,\D^k)=
    \D^i\paraa{g(\D^j,\D^k)}+\D^j\paraa{g(\D^k,\D^i)}-\D^k\paraa{g(\D^i,\D^j)}\\
    &-g([\D^j,\D^k],\D^i)+g([\D^k,\D^i],\D^j)+g([\D^i,\D^j],\D^k),    
  \end{split}\label{eq:Di.Kozuls.formula}
\end{equation}
and one notes that an element $\alpha=a\{b,\cdot\}\in\g$ may be
recovered from $g(\alpha,\D^i)$ as
\begin{align*}
  g(\alpha,\D^i)\D_i(f) = a\{b,x^k\}{\D^i}_k\D_i(f)
  =a\{b,x^k\}\D_k(f)=a\{b,f\} = \alpha(f).
\end{align*}
Thus, one immediately obtains
$\nabla_{\D^i}\D^j=g(\nabla_{\D^i}\D^j,\D^k)\D_k$. However, it turns
out that one can obtain a more compact formula for the connection. Let
us start by proving the following result.

\begin{lemma}\label{lemma:DiDj.commutator}
  $g([\D^i,\D^j],\D^k)=\D^i\paraa{\D^{jk}}-\D^j\paraa{\D^{ik}}$.
\end{lemma}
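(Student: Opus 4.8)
The goal is to compute $g([\D^i,\D^j],\D^k)$ directly from the definitions. The plan is to first recall that for any element $\alpha\in\g$, the metric pairing $g(\alpha,\D^k)$ recovers the ``$k$-th component'' of $\alpha$ in a simple form. Indeed, since $g(\alpha,\D^k)=\alpha(x^l)g_{lm}\D^{mk}$ by definition \eqref{eq:inducedMetricDef} and \eqref{eq:DPDDformulas} gives $\P^{ml}g_{lj}\D^{jk}$-type contractions collapsing nicely, I expect $g(\alpha,\D^k)$ to equal $\eta\,\alpha(x^l)g_{lm}\{x^m,x^k\}$, which is exactly $\alpha(x^m)g_{ml}\{x^l,x^k\}\eta=\D^k$ applied in reverse. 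The cleanest route is to observe that $g(\D^i,\D^k)=\D^{ik}$, which follows from ${\D^i}_l\D^{lk}=\D^{ik}$ in \eqref{eq:DPDDformulas}.

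Next I would write out the commutator $[\D^i,\D^j]$ acting on an arbitrary $f\in\A$. Using $\D^i(f)=\eta\{x^l,f\}g_{lm}\{x^m,x^i\}=\P^l(f)\D_l{}^i$-type expression (more precisely $\D^i(f)=\eta\{x^k,f\}g_{kl}\{x^l,x^i\}$), I expand
\begin{align*}
  [\D^i,\D^j](f)=\D^i\paraa{\D^j(f)}-\D^j\paraa{\D^i(f)}.
\end{align*}
Since $\D^i$ and $\D^j$ are derivations (being $\A$-linear combinations of inner derivations $\{x^l,\cdot\}$), the Leibniz rule applies when they hit the product $\eta\{x^k,f\}g_{kl}\{x^l,x^j\}$. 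The terms where a derivation lands on the ``inner'' bracket $\{x^k,f\}$ will, after antisymmetrization in $i,j$, combine via the Jacobi identity for the Poisson bracket; the terms where the derivation lands on $\eta$, $g_{kl}$, or the structure factor $\{x^l,x^j\}=\P^{lj}$ will produce the expressions $\D^i(\D^{jk})$ and $\D^j(\D^{ik})$ after pairing with $\D^k$ and using $g(\cdot,\D^k)$.

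The main obstacle I anticipate is the careful bookkeeping of the Jacobi-identity cancellation: when both derivations act on the factor containing $f$, one obtains a second-order term $\eta\,\eta'\{x^a,\{x^b,f\}\}$-type object, and these must cancel between the $\D^i\D^j$ and $\D^j\D^i$ contributions so that $[\D^i,\D^j](f)$ remains a first-order derivation (as it must, being an element of $\g$). The Jacobi identity $\{x^a,\{x^b,f\}\}-\{x^b,\{x^a,f\}\}=\{\{x^a,x^b\},f\}$ is what converts the apparently second-order remainder into a first-order term; I would need to verify that all such second-order pieces either cancel by antisymmetry or collapse via Jacobi into contributions already accounted for.

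Finally, I would pair the resulting first-order derivation with $\D^k$ using $g(-,\D^k)$. Because $g(a\{b,\cdot\},\D^k)=a\{b,x^l\}g_{lm}\D^{mk}=a\{b,x^l\}{\D^l}{}^k$ and the identity ${\P^l}_m\D^{mk}=\P^{lk}$-style relations from \eqref{eq:DPDDformulas} collapse the index contractions, the surviving terms should organize precisely into $\D^i(\D^{jk})-\D^j(\D^{ik})$. The key simplification throughout is that $g(\alpha,\D^k)$ applied to the generators isolates exactly the derivative of the symmetric tensor $\D^{jk}$, so the terms where $\D^i$ differentiates the ``coefficient'' $\D^{jk}$ survive while the genuinely bracket-valued pieces vanish under the metric pairing against $\D^k$ by the projection property of Proposition~\ref{prop:DorthProjection}. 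I would double-check the overall sign by testing against the manifold case where $\D^i$ reduces to a rescaled partial derivative.
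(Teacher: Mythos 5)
Your overall route is the paper's route: evaluate the commutator on the generators (which suffices, since $g(\alpha,\D^k)=\alpha(x^l){\D^k}_l$), expand by Leibniz, and clean up with the Jacobi identity and the contraction identities \eqref{eq:DPDDformulas} (in particular $\{a,x^l\}{\D^k}_l=\{a,x^k\}$). Up to the last step this matches the actual proof. The genuine gap is your closing claim that the ``genuinely bracket-valued pieces vanish under the metric pairing against $\D^k$ by the projection property of Proposition~\ref{prop:DorthProjection}.'' They do not. Writing $\Ph^{ij}=\eta\{x^i,x^j\}$ and $\Phind{i}{j}=\Ph^{ik}g_{kj}$, so that $\D^i(a)=\Phind{i}{m}\{a,x^m\}$ and $\D^{jl}=\Phind{j}{n}\{x^l,x^n\}$, one Leibniz expansion plus one use of Jacobi (the use you correctly anticipate, converting the second-order remainder into first order) and the contraction identity leave you with
\begin{align*}
  g([\D^i,\D^j],\D^k)
  =\Phind{i}{m}\Phind{j}{n}\{\{x^m,x^n\},x^k\}
  +\Phind{i}{m}\{\Phind{j}{n},x^m\}\{x^k,x^n\}
  -\Phind{j}{n}\{\Phind{i}{m},x^n\}\{x^k,x^m\}.
\end{align*}
The first term is exactly a bracket-valued piece, and it \emph{survives} the pairing: the projection property has already been spent in the contraction $\{a,x^l\}{\D^k}_l=\{a,x^k\}$ and no orthogonality argument removes it.

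Moreover, contrary to your sketch, the coefficient-derivative terms above are not yet $\D^i\paraa{\D^{jk}}-\D^j\paraa{\D^{ik}}$. Since $\D^i\paraa{\D^{jk}}=\Phind{i}{m}\{\Phind{j}{n}\{x^k,x^n\},x^m\}$, you must apply Leibniz in reverse,
\begin{align*}
  \Phind{i}{m}\{\Phind{j}{n},x^m\}\{x^k,x^n\}
  =\D^i\paraa{\D^{jk}}-\Phind{i}{m}\Phind{j}{n}\{\{x^k,x^n\},x^m\},
\end{align*}
and similarly for the third term; only after this regrouping do the three double-bracket pieces assemble into the cyclic sum $\{\{x^m,x^n\},x^k\}+\{\{x^n,x^k\},x^m\}+\{\{x^k,x^m\},x^n\}=0$ and cancel by a \emph{second} application of the Jacobi identity. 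So Jacobi is needed twice — once where you invoke it, and again precisely where you instead invoke Proposition~\ref{prop:DorthProjection}. Equivalently: your plan reduces the lemma to the identity $\D^{jl}\D^i\paraa{{\D^k}_l}=\D^{il}\D^j\paraa{{\D^k}_l}$, a Codazzi-type symmetry that does not follow from $\D^2=\D$ and $g(\D(X),Y)=g(X,\D(Y))$ alone; executed as written, your argument stalls with the uncancelled term $\Phind{i}{m}\Phind{j}{n}\{\{x^m,x^n\},x^k\}$.
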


\begin{proof}
  For convenience, let us introduce the notation $\Ph^{ij}=\eta\{x^i,x^j\}$ and,
  consequently, $\Phind{i}{j}=\Ph^{ik}g_{kj}$. In this notation, one
  finds $\D^i(a)=\Phind{i}{j}\{a,x^j\}$. Thus, one obtains
  \begin{align*}
    g([\D^i,\D^j],\D^k)
    &=[\D^i,\D^j](x^l){\D^{k}}_l
      =\Phind{i}{m}\{\D^{jl},x^m\}{\D^k}_l-\Phind{j}{n}\{\D^{il},x^n\}{\D^k}_l\\
    &=\paraa{\Phind{i}{m}\{\Phind{j}{n}\{x^l,x^n\},x^m\}-\Phind{j}{n}\{\Phind{i}{m}\{x^l,x^m\},x^n\}}{\D^k}_l\\
    &=\Phind{i}{m}\Phind{j}{n}\paraa{-\{\{x^n,x^l\},x^m\}-\{\{x^l,x^m\},x^n\}}{\D^k}_l\\
    &\qquad+\paraa{\Phind{i}{m}\{\Phind{j}{n},x^m\}\{x^l,x^n\}
      -\Phind{j}{n}\{\Phind{i}{m},x^n\}\{x^l,x^m\}}{\D^k}_l\\
    &=\Phind{i}{m}\Phind{j}{n}\{\{x^m,x^n\},x^k\}
      +\Phind{i}{m}\{\Phind{j}{n},x^m\}\{x^k,x^n\}\\
      &\qquad-\Phind{j}{n}\{\Phind{i}{m},x^n\}\{x^k,x^m\},
  \end{align*}
  by using the Jacobi identity together with
  $\{a,x^i\}{\D^k}_i = \{a,x^k\}$. Furthermore, in the second and
  third term, one uses Leibniz's rule to obtain
  \begin{align*}
    g([&\D^i,\D^j],\D^k)
    = \Phind{i}{m}\Phind{j}{n}\{\{x^m,x^n\},x^k\}
      +\Phind{i}{m}\{\Phind{j}{n}\{x^k,x^n\},x^m\}\\
           &-\Phind{i}{m}\Phind{j}{n}\{\{x^k,x^n\},x^m\}
             -\Phind{j}{n}\{\Phind{i}{m}\{x^k,x^m\},x^n\}
             +\Phind{j}{n}\Phind{i}{m}\{\{x^k,x^m\},x^n\}\\
           &=\Phind{i}{m}\Phind{j}{n}\paraa{\{\{x^m,x^n\},x^k\}
             +\{\{x^n,x^k\},x^m\}+\{\{x^k,x^m\},x^n\}}\\
           &\qquad+\D^i\paraa{\D^{jk}}-\D^j(\D^{ik})
             =\D^i\paraa{\D^{jk}}-\D^j(\D^{ik}),
  \end{align*}
  by again using the Jacobi identity.
\end{proof}

\noindent
The above result allows for the following formulation of the
Levi-Civita connection for a \KP\ algebra.

\begin{proposition}\label{prop:levi.civita.connection}
  If $\nabla$ denotes the Levi-Civita connection of a \KP\ algebra $\K$ then
  \begin{align}
    \nabla_{\D^i}\D^j = \half\D^i(\D^{jk})\D_k-\half\D^j(\D^{ik})\D_k+\half\D^k(\D^{ij})\D_k,\label{eq:levi.civita.connection.formula}
  \end{align}
  or, equivalently, $\nabla_{\D^i}\D^j={\Gamma^{ij}}_k\D^k$ where
  \begin{align}
    {\Gamma^{ij}}_k= \half\D^i(\D^{jl})\D_{lk}-\half\D^j(\D^{il})\D_{lk}+\half\D_k(\D^{ij}).\label{eq:christoffel.symbols}
  \end{align}
\end{proposition}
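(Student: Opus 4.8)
The plan is to start from Kozul's formula \eqref{eq:Di.Kozuls.formula} for the specific generators $\D^i$ and to rewrite each of the six terms on the right-hand side in terms of the quantities $\D^i(\D^{jk})$, so that the whole expression collapses into \eqref{eq:levi.civita.connection.formula}. The key observation is that the three bracket terms can be handled directly by Lemma~\ref{lemma:DiDj.commutator}, which gives $g([\D^i,\D^j],\D^k)=\D^i(\D^{jk})-\D^j(\D^{ik})$ and its permutations, so those contributions are immediate. The remaining work is to express the three ``derivative of the metric'' terms such as $\D^i(g(\D^j,\D^k))$ in the same language, and for this I would use the fundamental identity $g(\D^j,\D^k)=\D^{jk}$ (which follows from \eqref{eq:inducedMetricDef} together with the definitions $\D^i(a)=\eta\{x^k,a\}g_{kl}\{x^l,x^i\}$ and the index-lowering conventions, since $g(\D^i,\D^j)=\D^i(x^p)g_{pq}\D^j(x^q)={\D^i}_q\D^{jq}=\D^{ij}$ by \eqref{eq:DPDDformulas}).

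Once that identity is in hand, the first three terms of \eqref{eq:Di.Kozuls.formula} become exactly $\D^i(\D^{jk})+\D^j(\D^{ki})-\D^k(\D^{ij})$. First I would substitute the three commutator terms using Lemma~\ref{lemma:DiDj.commutator}: writing them out as $-g([\D^j,\D^k],\D^i)+g([\D^k,\D^i],\D^j)+g([\D^i,\D^j],\D^k)$ and expanding each via the lemma gives a sum of six terms of the form $\pm\D^a(\D^{bc})$. Then I would collect all nine resulting terms and verify the cancellations: several pairs cancel, leaving precisely $2g(\nabla_{\D^i}\D^j,\D^k)=\D^i(\D^{jk})-\D^j(\D^{ik})+\D^k(\D^{ij})$, where I must be careful to track the symmetry $\D^{jk}=\D^{kj}$ and the antisymmetry built into the commutator terms so that the bookkeeping of signs and index placement comes out right.

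Finally, to pass from $g(\nabla_{\D^i}\D^j,\D^k)$ to the connection itself, I would invoke the recovery formula stated just before the lemma, namely that any $\alpha\in\g$ satisfies $\alpha=g(\alpha,\D^k)\D_k$ (equivalently $\nabla_{\D^i}\D^j=g(\nabla_{\D^i}\D^j,\D^k)\D_k$). Applying this with the computed value of $g(\nabla_{\D^i}\D^j,\D^k)$ immediately yields \eqref{eq:levi.civita.connection.formula}. The equivalent form \eqref{eq:christoffel.symbols} then follows by reading off the coefficient of $\D^k$ after lowering the free index $k$ with the metric, i.e. by using ${\D^i}_k\D^{kl}$-type contractions and the index-lowering convention $\D_k(\D^{ij})=\D_{kl}\D^l(\D^{ij})$; this is purely notational rearrangement.

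The main obstacle I anticipate is the middle step: correctly establishing $g(\D^i,\D^j)=\D^{ij}$ and then executing the sign-and-index bookkeeping when combining the metric-derivative terms with the six commutator terms from the lemma. In particular one must be vigilant that the derivative $\D^i$ acts on $\D^{jk}$ as a derivation (so no hidden Leibniz terms are dropped), and that the cyclic structure of the three commutator contributions lines up with the three metric terms so that exactly the right four terms cancel; getting a spurious factor or a sign wrong here is the most likely point of failure.
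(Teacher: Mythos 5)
Your proposal is correct and takes essentially the same route as the paper's proof: you apply Kozul's formula \eqref{eq:Di.Kozuls.formula} to the generators, substitute $g(\D^i,\D^j)=\D^{ij}$ (which you rightly justify via \eqref{eq:DPDDformulas}) and Lemma~\ref{lemma:DiDj.commutator} for the three commutator terms, and the cancellation indeed leaves $2g(\nabla_{\D^i}\D^j,\D^k)=\D^i(\D^{jk})-\D^j(\D^{ik})+\D^k(\D^{ij})$, exactly as in the paper. Your final step to \eqref{eq:christoffel.symbols} likewise matches the paper's, which cites the identities $\D_{ij}\D^j=\D_i$ and $\D^k(a)\D_k(b)=\D_k(a)\D^k(b)$ for the same rearrangement you describe.
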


\begin{proof}
  Since $g(\D^i,\D^j)=\D^{ij}$, Kozul's formula
  \eqref{eq:Di.Kozuls.formula} together with
  Lemma~\ref{lemma:DiDj.commutator} gives
  \begin{align*}
    2g(\nabla_{\D^i}\D^j,\D^k)&=
                                \D^i(\D^{jk})+\D^j(\D^{ki})-\D^k(\D^{ij})
                                -\D^j(\D^{ki})+\D^k(\D^{ji})\\
                              &+\D^k(\D^{ij})-\D^i(\D^{kj})+\D^i(\D^{jk})-\D^j(\D^{ik})\\
                              &= \D^i(\D^{jk})-\D^j(\D^{ki})+\D^k(\D^{ij}),
  \end{align*}
  which proves \eqref{eq:levi.civita.connection.formula}. The fact
  that one may write the connection as
  $\nabla_{\D^i}\D^j = {\Gamma^{ij}}_k\D^k$ follows from
  $\D_{ij}\D^j=\D_i$ and $\D^k(a)\D_k(b)=\D_k(a)\D^k(b)$.
\end{proof}

\noindent
Thus, for arbitrary elements of $\g$, one obtains
\begin{align}
  \nabla_{\alpha}\beta = \alpha(\beta_i)\D^i + {\Gamma^{ij}}_k\alpha_i\beta_j\D^k 
\end{align}
where $\alpha=\alpha_i\D^i$ and $\beta=\beta_i\D^i$, and curvature is
readily introduced as
\begin{align*}
  R(\alpha,\beta)\gamma = \nabla_\alpha\nabla_\beta\gamma-\nabla_\beta\nabla_\alpha\gamma-\nabla_{[\alpha,\beta]}\gamma.
\end{align*}
Ricci curvature is defined as
\begin{align*}
  &\Ric(\alpha,\beta) = \tr\paraa{\gamma\to R(\gamma,\alpha)\beta}
\end{align*}
and using the trace from Section~\ref{sec:trace}, one obtains
\begin{align*}
  &\Ric(\alpha,\beta) = g(R(\D^i,\alpha)\beta,\D^j)\D_{ij}.
\end{align*}
To define the scalar curvature, one considers the Ricci curvature as a linear map
\begin{align*}
  \Ric: \g \to\g\quad\text{with}\quad
  \Ric(\alpha) = \Ric(\alpha,\D^i)\D_i,
\end{align*}
giving
\begin{align*}
  S = \tr\paraa{\alpha\to\Ric(\alpha)}
  = g\paraa{R(\D^i,\D^k)\D^l,\D^j}\D_{ij}\D_{kl}.
\end{align*}

\noindent
Note that since the metric is nondegenerate, there exists a unique
element $\nabla f\in\g$ such that $g(\nabla f,\alpha) = \alpha(f)$ for
all $\alpha\in\g$; we call $\nabla f$ the \emph{gradient of $f$}. Now,
it is easy to see that
\begin{align*}
  \nabla f = \D_i(f)\D^i
\end{align*}
since
\begin{align*}
  g(\D_i(f)\D^i,\alpha_j\D^j) = \D_i(f)\alpha_j\D^{ij}  = \alpha_j\D^j(f)
  =\alpha(f).
\end{align*}
The divergence of an element $\alpha\in\g$ is defined as
\begin{align*}
  \div(\alpha) = \tr(\beta\to\nabla_{\beta}\alpha),
\end{align*}
and, finally, the Laplacian
\begin{align*}
  \Delta(f)=\div(\nabla f).
\end{align*}

\section{Examples}
\label{sec:examples}

\noindent
As shown in Section~\ref{sec:poisson.algebraic.geometry}, the algebra
of smooth functions on an almost K\"ahler manifold $M$ becomes a \KP\
algebra when choosing $x^1,\ldots,x^m$ to be embedding coordinates,
providing an isometric embedding into $\reals^m$, endowed with the
standard Euclidean metric. (Recall that, by Nash's
theorem~\cite{nash:imbedding}, such an embedding always exists.)  In
this section, we shall present examples of a more algebraic nature to
illustrate the fact that algebras of smooth functions are not the only
examples of \KP\ algebras.

Keeping in mind the general construction procedure in
Section~\ref{sec:kp.general.construction}, we consider finitely
generated Poisson algebras with a low number of generators. 

\subsection{Poisson algebras generated by two elements}
\label{sec:ex.two.elements}

\noindent Let $\A$ be a unital Poisson algebra generated by the two
elements $x^1=x\in\A$ and $x^2=y\in\A$, and set
\begin{align*}
  \P =
  \begin{pmatrix}
    0 & \{x,y\}\\
    -\{x,y\} & 0
  \end{pmatrix}
\end{align*}
It is easy to check that for an arbitrary symmetric matrix $g$
\begin{align*}
  \P g\P g\P = -\{x,y\}^2\det(g)\P.
\end{align*}
Thus, as long as $\{x,y\}^2\det(g)$ is not a zero-divisor, one may
localize to obtain a \KP\ algebra
\begin{align*}
 \K=(\A[(\{x,y\}^2\det(g))^{-1}],\{x,y\},g).
\end{align*}
For the sake of illustrating the concepts and formulas we have
developed so far, let us explicitly work out an example based on an
algebra $\A_0$, generated by two elements. Let us start by choosing an
element $\lambda\in\A_0$ for which the localization
$\A=\A_0[p^{-1},\lambda^{-1}]$ exists, and then defining the metric as
\begin{align*}
  g = \frac{1}{\lambda}
  \begin{pmatrix}
    1 & 0 \\ 0 & 1
  \end{pmatrix}
\end{align*}
From the above considerations, we know that $(\A,\{x,y\},g)$ is a \KP\
algebra with $\eta=\lambda^2/p^2$, where $p=\{x,y\}$. For convenience
we also introduce $\gamma=p/\lambda$ such that $\eta=1/\gamma^2$. Let
us start by computing the derivations $\D^x=\D^1$ and $\D^y=\D^2$,
which generate the module $\g$:
\begin{align*}
  &\D^x = \eta\{x,x^i\}g_{ij}\{\cdot,x^j\} = \frac{\lambda}{p}\{\cdot,y\}
  =-\gover\{y,\cdot\}\\
  &\D^y = \eta\{y,x^i\}g_{ij}\{\cdot,x^j\}=-\frac{\lambda}{p}\{\cdot,x\}
    =\gover\{x,\cdot\}
\end{align*}
as well as
\begin{align*}
  &\D_x = g_{1k}\D^k = \frac{1}{\lambda}\D^x\mathand
    \D_y = g_{2k}\D^k = \frac{1}{\lambda}\D^y.
\end{align*}
Moreover, they provide an orthogonal set of generators since
\begin{align*}
  &g(\D^x,\D^x) = \gover\{y,x^i\}g_{ij}\gover\{y,x^j\} = \frac{1}{\gamma^2}\frac{p^2}{\lambda}=\lambda\\
  &g(\D^y,\D^y) = \lambda\qquad g(\D^x,\D^y) = 0,
\end{align*}
and one obtains
\begin{align*}
  (\D^{ij})=\paraa{g(\D^i,\D^j)}=
  \begin{pmatrix}
    \lambda & 0 \\
    0 & \lambda
  \end{pmatrix}.
\end{align*}
Note that $\g$ is a free module with basis $\{\D^x,\D^y\}$ since
\begin{align*}
  &a\D^x+b\D^y = 0\Rightarrow
  \begin{cases}
    a\D^x(x)+b\D^y(x)=0\\
    a\D^x(y)+b\D^y(y)=0
  \end{cases}\Rightarrow
  \begin{cases}
    -a\gover\{y,x\} = 0\\
    b\gover\{x,y\} = 0
  \end{cases}\Rightarrow
  \begin{cases}
    a = 0\\
    b = 0
  \end{cases}
\end{align*}
by using that $\lambda$ is invertible.

Let us introduce the derivation $\D^\lambda=\gi\{\lambda,\cdot\}$ and note that
\begin{align*}
  \D^\lambda = [\D^x,\D^y]=\frac{1}{\lambda}\D^x(\lambda)\D^y-\frac{1}{\lambda}\D^y(\lambda)\D^x.
\end{align*}
From Proposition~\ref{prop:levi.civita.connection} one computes the connection:
\begin{align*}
  \nabla_{\D^x}\D^x &= \half \D^1(\D^{1k})\D_k-\half\D^1(\D^{1k})\D_k+\half\D^k(\D^{11})\D_k\\
                    &=\half\D^x(\lambda)\D_x+\half\D^y(\lambda)\D_y=\half\D^i(\lambda)\D_i
\end{align*}
and similarly
\begin{align*}
  \nabla_{\D^y}\D^y&=\half\D^x(\lambda)\D_x+\half\D^y(\lambda)\D_y=\nabla_{\D^x}\D^x\\
  \nabla_{\D^x}\D^y &=\half\D^x(\lambda)\D_y-\half\D^y(\lambda)\D_x = \D^\lambda\\
  \nabla_{\D^y}\D^x &=\half\D^y(\lambda)\D_x-\half\D^x(\lambda)\D_y=-\D^\lambda
\end{align*}
Moreover, the curvature can readily be computed
\begin{align*}
  &R(\D^x,\D^y)\D^x = \bracketc{\D_x(\lambda)^2+\D_y(\lambda)^2
    -\half\D_x\paraa{\D^x(\lambda)}-\half\D_y\paraa{\D^y(\lambda)}}\D^y\\
  &R(\D^x,\D^y)\D^y =-\bracketc{\D_x(\lambda)^2+\D_y(\lambda)^2
    -\half\D_x\paraa{\D^x(\lambda)}-\half\D_y\paraa{\D^y(\lambda)}}\D^x,
\end{align*}
as well as the scalar curvature
\begin{align*}
  S = \frac{1}{\lambda}\parab{
  \D_x\paraa{\D^x(\lambda)}+\D_y\paraa{\D^y(\lambda)}-2\D_x(\lambda)^2-2\D_y(\lambda)^2}.
\end{align*}
Moreover, one finds that
\begin{align*}
  \nabla f &= \D^x(f)\D_x+\D^y(f)\D_y\\
  \div(\alpha_x&\D^x+\alpha_y\D^y) = \D^x(\alpha_x)+\D^y(\alpha_y)\\
  \Delta(f) &= \D^x\paraa{\D_x(f)}+\D^y\paraa{\D_y(f)}\\
  &=\D_x\paraa{\D^x(f)}+\D_y\paraa{\D^y(f)}-\D_x(\lambda)\D_x(f)-\D_y(\lambda)\D_y(f).
\end{align*}

\subsection{Poisson algebras generated by three elements}
\label{sec:ex.three.elements}

\noindent Let $\A$ be a unital Poisson algebra generated by
$x^1=x,x^2=y,x^3=z\in\A$. Writing $\{x,y\}=a$, $\{y,z\}=b$ and $\{z,x\}=c$, i.e.
\begin{align*}
  \P = 
  \begin{pmatrix}
    0  & a  & -c \\
    -a & 0  & b\\
    c  & -b & 0
  \end{pmatrix},
\end{align*}
one readily checks that for an arbitrary symmetric matrix $g$
\begin{align*}
  \P g\P g\P = -\tau\P
\end{align*}
with
\begin{align*}
  \tau = a^2|g|_{33}+b^2|g|_{11}+c^2|g|_{22}+2ab|g|_{31}-2ac|g|_{32} -2bc|g|_{21},
\end{align*}
where $|g|_{ij}$ denotes the determinant of the matrix obtained from
$g$ by deleting the $i$'th row and the $j$'th column. Thus, one may construct the \KP\ algebra
\begin{align*}
  \K = \{\A[\tau^{-1}],\{x,y,z\},g\}.
\end{align*}
In particular, if $g=\diag(\lambda,\lambda,\lambda)$, then $\tau=\lambda^2(a^2+b^2+c^2)$.

Let us now construct a particular class of algebras with a natural
geometric interpretation and a close connection to algebraic
geometry. Let $\reals[x,y,z]$ be the polynomial ring in three
variables over the real numbers, and write $x^1=x$, $x^2=y$ and
$x^3=z$. For arbitrary $C\in\reals[x,y,z]$, it is straight-forward to
show that
\begin{align*}
  \{x^i,x^j\} = \eps^{ijk}\d_kC,
\end{align*}
where $\eps^{ijk}$ denotes the totally antisymmetric symbol with
$\eps^{123}=1$, defines a Poisson structure on $\reals[x,y,z]$ which
is well-defined on the quotient $\A_C=\reals[x,y,z]/(C)$
since
\begin{align*}
  \{x^i,C\} = \{x^i,x^j\}\d_jC = \eps^{ijk}(\d_kC)(\d_jC)=0.
\end{align*}
In the spirit of algebraic geometry, the algebra $\A_C$ has a natural
interpretation as the polynomial functions on the level set
$C(x,y,z)=0$ in $\reals^3$. Choosing the metric
$g_{ij}=\delta_{ij}\mid$ (corresponding to the Euclidean metric on
$\reals^3$) one obtains a \KP\ algebra
$(\widehat{\A}_C,\{x,y,z\},g)$ where
\begin{align*}
  \widehat{\A}_C=\A_C[\tau^{-1}]\mathand
  \tau = \paraa{\d_xC}^2+\paraa{\d_yC}^2+\paraa{\d_zC}^2,
\end{align*}
with $\eta=\tau^{-1}$. Note that the points in $\reals^3$, for which
$\tau(x,y,z)=0$, coincide with the singular points of $C(x,y,z)=0$;
i.e. points where $\d_xC=\d_yC=\d_zC=0$.

As an illustration, let us choose
$C=\tfrac{1}{2}(ax^2+by^2+cz^2-\mid)$ for $a,b,c\in\reals$, giving
\begin{align*}
  \{x,y\} = cz,\quad\{y,z\}=ax\mathand\{z,x\}=by.
\end{align*}
and
\begin{align*}
  \eta = \paraa{a^2x^2+b^2y^2+c^2z^2}^{-1}
\end{align*}
together with
\begin{align*}
  (\D^{ij})= \eta
  \begin{pmatrix}
    b^2y^2+c^2z^2 & -abxy & -acxz\\
    -abxy & a^2x^2+c^2z^2 & -bcyz \\
    -acxz & -bcyz & a^2x^2+b^2y^2
  \end{pmatrix}.
\end{align*}
A straight-forward, but somewhat lengthy, calculation gives
\begin{align*}
  &R(\D^x,\D^y)
  \begin{pmatrix}
    \D^x \\ \D^y \\ \D^z
  \end{pmatrix}
  =cz\,\Rh
  \begin{pmatrix}
    \D^x \\ \D^y \\ \D^z
  \end{pmatrix}\qquad
  R(\D^y,\D^z)
  \begin{pmatrix}
    \D^x \\ \D^y \\ \D^z
  \end{pmatrix}
  = ax\,\Rh
  \begin{pmatrix}
    \D^x \\ \D^y \\ \D^z
  \end{pmatrix}\\
  &  R(\D^z,\D^x)
  \begin{pmatrix}
    \D^x \\ \D^y \\ \D^z
  \end{pmatrix}
  =by\,\Rh
  \begin{pmatrix}
    \D^x \\ \D^y \\ \D^z
  \end{pmatrix}\quad\text{where}\quad
  \Rh =abc\eta^3
  \begin{pmatrix}
    0 & -cz & by \\
    cz & 0 & -ax \\
    -by & ax & 0 
  \end{pmatrix},  
\end{align*}
and the scalar curvature becomes
\begin{align*}
  S = 2abc\eta^2.
\end{align*}

\section{Summary}\label{sec:summary}

\noindent
In this note, we have introduced the concept of \KP\ algebras as a
mean to study Poisson algebras from a metric point of view. As shown,
the single relation \eqref{eq:KPalgDefRel} has consequences that allow
for an identification of geometric objects in the algebra, which share
crucial properties with their classical counterparts.  The idea behind
the construction was to identify a distinguished set of elements in
the algebra that serve as ``embedding coordinates'', and then
construct the projection operator $\D$ that projects from the tangent
space of the ambient manifold onto that of the embedded
submanifold. It is somewhat surprising that \eqref{eq:KPalgDefRel}
encodes the crucial elements that are needed for the algebra to
resemble an algebra of functions on an almost K\"ahler manifold.

As outlined in Section~\ref{sec:kp.general.construction}, a large
class of Poisson algebras admit a \KP\ algebra as an associated
localization, which shows a certain generality of our treatment. Thus,
even if one is not interested in metric structures on a Poisson
algebra, the tools we have developed might be of help. For instance,
if a Poisson algebra can be given the structure of a \KP\ algebra, one
immediately concludes that the module generated by the inner
derivations is a finitely generated projective module. A statement
which is clearly independent of any metric structure. A comparison
with differential geometry is close at hand, where the structure of a
Riemannian manifold can be used to prove results about the underlying
manifold (or even the topological structure).

Let us end with a brief outlook. After having studied the basic
properties of \KP\ algebras in this paper, there are several natural
questions that can be studied. For instance, what is the interplay
between the cohomology (of Lie-Rinehart algebras) and the Levi-Civita
connection? Can one perhaps use the connection to compute cohomology?
Is there a natural way to study the moduli spaces of Poisson algebras;
i.e. how many (non-isomorphic) \KP\ structures does there exist on a
given Poisson algebra? We hope to return to these, and many other
interesting questions, in the near future.

\section*{Acknowledgments}

\noindent
We would like to thank M. Izquierdo for ideas and
discussions. Furthermore, J. A. is supported by the Swedish Research
Council.

\bibliographystyle{alpha}
\bibliography{kpalgebras}  

\begin{thebibliography}{AHH12}

\bibitem[AH14]{ah:pseudo.riemannian}
J.~Arnlind and G.~Huisken.
\newblock Pseudo-{R}iemannian geometry in terms of multi-linear brackets.
\newblock {\em Lett. Math. Phys.}, 104(12):1507--1521, 2014.

\bibitem[AHH12]{ahh:multi.linear}
J.~Arnlind, J.~Hoppe, and G.~Huisken.
\newblock Multi-linear formulation of differential geometry and matrix
  regularizations.
\newblock {\em J. Differential Geom.}, 91(1):1--39, 2012.

\bibitem[Ber79]{b:poisson.algebraic.geometry}
R.~Berger.
\newblock G\'eom\'etrie alg\'ebrique de {P}oisson.
\newblock {\em C. R. Acad. Sci. Paris S\'er. A-B}, 289(11):A583--A585, 1979.

\bibitem[Bry88]{b:differential.complex.poisson}
J.-L. Brylinski.
\newblock A differential complex for {P}oisson manifolds.
\newblock {\em J. Differential Geom.}, 28(1):93--114, 1988.

\bibitem[BS10]{bs:curvature.gravity.matrix.models}
D.~N. Blaschke and H.~Steinacker.
\newblock Curvature and gravity actions for matrix models.
\newblock {\em Classical Quantum Gravity}, 27(16):165010, 15, 2010.

\bibitem[Hel01]{h:diffsymspaces}
S.~Helgason.
\newblock {\em Differential geometry, {L}ie groups, and symmetric spaces},
  volume~34 of {\em Graduate Studies in Mathematics}.
\newblock American Mathematical Society, Providence, RI, 2001.

\bibitem[Her53]{h:pseudo.algebre.lie}
J.-C. Herz.
\newblock Pseudo-alg\`ebres de {L}ie. {I}.
\newblock {\em C. R. Acad. Sci. Paris}, 236:1935--1937, 1953.

\bibitem[Hue90]{h:poisson.cohomology}
J.~Huebschmann.
\newblock Poisson cohomology and quantization.
\newblock {\em J. Reine Angew. Math.}, 408:57--113, 1990.

\bibitem[Hue99]{h:exentsions.lie.rinehart}
J.~Huebschmann.
\newblock Extensions of {L}ie-{R}inehart algebras and the {C}hern-{W}eil
  construction.
\newblock In {\em Higher homotopy structures in topology and mathematical
  physics ({P}oughkeepsie, {NY}, 1996)}, volume 227 of {\em Contemp. Math.},
  pages 145--176. Amer. Math. Soc., Providence, RI, 1999.

\bibitem[Kar02]{k:covariant.quantization.separation}
A.~Karabegov.
\newblock A covariant {P}oisson deformation quantization with separation of
  variables up to the third order.
\newblock {\em Lett. Math. Phys.}, 61(3):255--261, 2002.

\bibitem[Kon03]{k:deformation.quantization}
M.~Kontsevich.
\newblock Deformation quantization of {P}oisson manifolds.
\newblock {\em Lett. Math. Phys.}, 66(3):157--216, 2003.

\bibitem[Koz60]{k:fibre.bundles}
J.~L. Kozul.
\newblock {\em Lectures on fibre bundles and differential geometry}.
\newblock Tata Institute of Fundamental Research, Bombay, 1960.

\bibitem[Lic77]{l:poisson.lie}
A.~Lichnerowicz.
\newblock Les vari\'et\'es de {P}oisson et leurs alg\`ebres de {L}ie
  associ\'ees.
\newblock {\em J. Differential Geometry}, 12(2):253--300, 1977.

\bibitem[Nas56]{nash:imbedding}
J.~Nash.
\newblock The imbedding problem for {R}iemannian manifolds.
\newblock {\em Ann. of Math. (2)}, 63:20--63, 1956.

\bibitem[Nel67]{n:tensoranalysis}
E.~Nelson.
\newblock {\em Tensor Analysis}.
\newblock Princeton University Press, Princeton, New Jersey, 1967.

\bibitem[Pal61]{p:cohomology.lie.rings}
R.~S. Palais.
\newblock The cohomology of {L}ie rings.
\newblock In {\em Proc. {S}ympos. {P}ure {M}ath., {V}ol. {III}}, pages
  130--137. American Mathematical Society, Providence, R.I., 1961.

\bibitem[Rin63]{r:differential.forms}
G.~S. Rinehart.
\newblock Differential forms on general commutative algebras.
\newblock {\em Trans. Amer. Math. Soc.}, 108:195--222, 1963.

\bibitem[Wei83]{a:local.structure.poisson}
A.~Weinstein.
\newblock The local structure of {P}oisson manifolds.
\newblock {\em J. Differential Geom.}, 18(3):523--557, 1983.

\end{thebibliography}

\end{document}